\title{Tensor generators on schemes and stacks}
\author{Philipp Gross}
\subjclass[2010]{14A20, 14L30, 14F05}
\keywords {Resolution property, locally free sheaf, vector bundle, generating family, tensor generator, algebraic space, algebraic stack, quotient stack, AF-scheme}
\date{\today}
\setlist{itemsep=0.5em}
\newtheoremstyle{dtheoremnopar}{3 mm}{1 mm}{\itshape}{}{\bfseries}{.}{ }
{\thmname{#1}\thmnumber{ #2}\thmnote{ \mdseries(#3)\bfseries}}
\theoremstyle{dtheoremnopar}
\newcounter{theoremx}
\newtheorem{theoremalpha}[theoremx]{Theorem}
\newtheoremstyle{dtheorem}{3 mm}{1 mm}{\itshape}{}{\bfseries}{.}{ }
  {\thmnumber{(#2) }\thmname{#1}\thmnote{ \mdseries(#3)\bfseries}}
\newtheoremstyle{ddef}{3 mm}{1 mm}{\normalfont}{}{\bfseries}{.}{ }
  {\thmnumber{(#2) }\thmname{#1}\thmnote{ \mdseries(#3)\bfseries}}
\newtheoremstyle{dremark}{3 mm}{1 mm}{\normalfont}{}{\itshape}{}{ }
  {\thmnumber{\upshape\bfseries(#2) }\itshape\mdseries\thmname{#1}.\thmnote{\;\mdseries #3 ---}}
\theoremstyle{dtheorem}
\newtheorem{thm}{Theorem}[section]
\numberwithin{equation}{thm}
\newtheorem{prop}[thm]{Proposition}
\newtheorem{lem}[thm]{Lemma}
\newtheorem{cor}[thm]{Corollary}
\theoremstyle{ddef}
\newtheorem{ex}[thm]{Example}
\newtheorem{defi}[thm]{Definition}
\theoremstyle{dremark}
\newtheorem{rem}[thm]{Remark}
\newcommand{\msheaf}[1]{\mathcal{#1}}		
\newcommand{\shC}{\msheaf{C}} 
\newcommand{\shE}{\msheaf{E}}\newcommand{\shF}{\msheaf{F}}
\newcommand{\shG}{\msheaf{G}}\newcommand{\shI}{\msheaf{I}}
\newcommand{\shL}{\msheaf{L}}
\newcommand{\shM}{\msheaf{M}}\newcommand{\shN}{\msheaf{N}}\newcommand{\shO}{\msheaf{O}}
\newcommand{\shV}{\msheaf{V}}
\newcommand{\shHom}{\msheaf{H}om}
\newcommand{\shIsom}{\msheaf{I}som}
\newcommand{\bbA}{\mathbb{A}}
\newcommand{\bbG}{\mathbb{G}}
\newcommand{\bbN}{\mathbb{N}}
\newcommand{\bbQ}{\mathbb{Q}}
\newcommand{\bbZ}{\mathbb{Z}}
\newcommand{\Qco}{\operatorname{QCoh}}
\newcommand{\coloneq}{\ensuremath{:=}}
\newcommand{\AF}{\text{AF}}
\newcommand{\pr}{\text{pr}}
\newcommand{\red}{\text{red}}
\newcommand{\fppf}{\text{fppf}}				
\newcommand{\fpqc}{\text{fpqc}}
\newcommand{\et}{\text{\'et}}
\DeclareMathOperator{\coker}{\operatorname{coker}}
\DeclareMathOperator{\id}{\operatorname{id}}
\DeclareMathOperator{\GL}{\operatorname{GL}}
\newcommand{\Gm}{\bbG_m}
\DeclareMathOperator{\Mod}{\operatorname{Mod}}
\DeclareMathOperator{\rk}{\operatorname{rank}}
\DeclareMathOperator{\Sym}{\operatorname{Sym}}
\DeclareMathOperator{\Aut}{\operatorname{Aut}}
\DeclareMathOperator{\Coho}{\operatorname{H}}
\DeclareMathOperator{\Ext}{\operatorname{Ext}}
\DeclareMathOperator{\Hom}{\operatorname{Hom}}
\DeclareMathOperator{\Ann}{\operatorname{Ann}}
\DeclareMathOperator{\norm}{\operatorname{N}}
\DeclareMathOperator{\Spec}{\operatorname{Spec}}
\newcommand{\clsp}{\ensuremath{B}}
\newcommand{\pts}[1]{\left | #1 \right |}
\newcommand{\pf}[1]{ {#1}_* }
\newcommand{\pb}[1]{ {#1}^* }
\newcommand{\us}[1]{ {#1}^!}
\newcommand{\newdef}[1]{\emph{#1}}
\begin{document}

\begin{abstract}
We show that an algebraic stack with affine stabilizer groups satisfies the resolution property if and only if it is a quotient of a quasi-affine scheme by the action of the general linear group, or equivalently, if there exists a vector bundle whose associated frame bundle has quasi-affine total space. This generalizes a result of B. Totaro to non-normal and non-noetherian schemes and algebraic stacks. 
Also, we show that the vector bundle induces such a quotient structure if and only if it is a tensor generator in the category of quasi-coherent sheaves.
\end{abstract}
\maketitle

\section*{Introduction}
\label{SEC:intro}

When $X$ is a scheme, or an algebraic stack, such that every quasi-coherent sheaf of finite type  $\shF$ is a quotient of a locally free sheaf $\shG$, then one says that $X$ satisfies the \emph{resolution property}, and this has important consequences for $K$-theory. Whereas this condition holds for quasi-projective or $\bbQ$-locally factorial schemes with affine diagonal, it has been difficult to verify for general schemes that do not possess enough invertible sheaves. It is easy to construct counterexamples when the diagonal is not affine, but even the case of toric separated threefolds is still open. 

A related question deals with global quotient stack presentations.
When $X$ is a quotient stack $[U/G]$, for some algebraic space  $U$  acted on by a group scheme $G$, then the geometry of $X$ is equivalent to the $G$-equivariant geometry of $U$. Therefore, it is a fundamental question, whether such a global quotient stack presentation exists.

One says that $X$  is \emph{basic} if $U$ is a quasi-affine scheme and  $G=\GL_{n}$ \cite[Def. 2.1]{Ryd15:noetherianapprox}, or equivalently, if $X$ is endowed with a vector bundle $\shV$ of rank $n$, whose associated frame bundle $U\to X$ has quasi-affine total space. Such a stack has affine diagonal, and at each point $x$ of $X$, the stabilizer group $\Aut(x)$ is affine and acts faithfully on $\shV_x$. But not every algebraic stack with affine diagonal is basic. For example, the $\Gm$-gerbe associated with a non-torsion element of the cohomological Brauer group is not a global quotient stack.

It is a classical result by Thomason that a noetherian basic stack $X$ satisfies the resolution property \cite{Tho87:MR893468}. The converse was proven by Totaro  under the hypothesis that $X$ is a normal noetherian algebraic stack, and that the stabilizer groups are affine at closed points \cite[Thm.~1.1]{Tot04:MR2108211}.  He observed that a careful polynomial combination of enough locally free sheaves produces a locally free sheaf whose associated frame bundle is eventually representable by a quasi-affine scheme. 

The goal of this paper is to prove the equivalence of the resolution property and being basic in large generality, by removing the normality and noetherian assumption. 

Moreover, we show that for a basic stack $X$, the distinguished vector bundle $\shV$ is a \emph{tensor generator} in the sense that it generates ``enough'' locally free sheaves by the process of taking direct sums, duals, tensor products and certain locally split subsheaves. This generalizes the well-known fact that linear representations of algebraic group schemes can be reconstructed from a faithful representation.

In more detail, the main result is the following:

\begin{theoremalpha}\label{THM:thmA}
Let $X$ be a quasi-compact and quasi-separated algebraic stack (over $\bbZ$). Then the following are equivalent.
\begin{enumerate}
	\item \label{THM:IT:thmA:resprop} $X$ has affine stabilizer groups at closed points, and satisfies the resolution property.
	\item \label{THM:IT:thmA:tengen} $X$ has affine stabilizer groups at closed points, and admits a tensor generator $\shV$ of rank $n$: Every quasi-coherent sheaf $\shF$ of finite type is a quotient of a subsheaf $\shG \subset P(\shV, \shV^\vee)$ for some polynomial $P \in \bbN[r,s]$, such that $\shG$ is a direct summand, when restricted to the frame bundle $\shIsom_X(\shO_X^n, \shV)$.
	\item \label{THM:IT:thmA:basic} $X$ is basic, i.e $X=[U/\GL_n]$, where $U$ is a quasi-affine scheme with an action of $\GL_n$, $n \geq 0$. In particular, $X$ has affine diagonal.
\end{enumerate}
Moreover, if $\shV$ is a tensor generator, then $U$ in \ref{THM:IT:thmA:basic} can be taken as the total space of the frame bundle associated to $\shV$ and vice versa. Also, if $\shV$ has linearly reductive structure group (for example, if $\shV$ is a direct sum of line bundles, or if $X$ is of characteristic $0$), then in \ref{THM:IT:thmA:tengen}, the sheaf $\shF$ is a quotient of some $P(\shV, \shV^\vee)$.
\end{theoremalpha}

As an application of the linearly reductive case, we derive that an algebraic stack $X$ has a generating family of invertible sheaves and affine stabilizer groups, if and only if $X=[U/\Gm^n]$ for some quasi-affine scheme $U$ (Corollary \ref{COR:split_tensor_generator}). This was proven by Hausen for integral schemes of finite type over an algebraically closed field \cite[Thm. 1]{Hau02:MR1900763}.

Note that when $n=0$, the theorem implies that an algebraic stack $X$ with affine stabilizers at closed points is representable by a quasi-affine scheme if and only if every quasi-coherent sheaf is globally generated. On the other hand, if $X=\clsp \GL_n$ and $\shV$ corresponds to the standard representation of $\GL_n$, we recover the fact that every linear representation is a subquotient of a polynomial expression in $\shV$ and $\shV^\vee$. The general case follows by constructing a quasi-affine morphism $X \to \clsp \GL_{n}$.

In the process of verifying the aforementioned representability criterion we prove that the $AF$-property (i.e.~every finite set of points admits an affine open neighborhood, also known as the Chevalley-Kleiman property) descends along finite surjections in the category of algebraic spaces:

\begin{theoremalpha}\label{THM:AF_descends_simple}
Let $X$ be a quasi-compact and quasi-separated algebraic space and let $f \colon Y \to X$ be a finite surjective morphism. Then $X$ is an $\AF$-scheme if and only if $Y$ is an $\AF$-scheme.
\end{theoremalpha}
Kollar also independently proved this in the excellent case \cite[Cor. 48]{Kol11}, so our result is just a mild generalization.

\smallskip

This article is largely based on my thesis from 2010. The recent improvements of approximating general algebraic stacks \cite{Ryd15:noetherianapprox} made it possible to remove many technical assumptions.

\subsection*{Outline}
In section \ref{gensh:SEC} we define relatively generating families of finitely presented quasi-coherent sheaves with respect to a morphism of algebraic stacks and give a thorough discussion of the  basic permanence properties (Proposition \ref{PROP:permanence}). 

Section \ref{SEC:pinching} deals with pinching $\AF$-schemes, and there we prove Theorem \ref{THM:AF_descends_simple} and generalizations thereof.

We characterize in section \ref{SEC:quasi-affiness}  quasi-affine morphisms in terms of the counit $\pb f \pf f \shF \to \shF$. The proof involves several reduction steps from algebraic stacks, algebraic spaces to the well-known case of schemes. 

Section \ref{SEC:resprop} deals with relatively generating families of vector bundles, which enables us to treat the resolution property for morphisms $X \to Y$ of algebraic stacks. For the reader's convenience, we recall the main classes of algebraic stacks where the resolution property is known to be true.

Finally, in section \ref{SEC:tensorgen}, we give the proof of Theorem \ref{THM:thmA}. 

\subsection*{Acknowledgments}
I would like to thank Stefan Schr\"oer for the supervision and Burt Totaro for the review of my thesis. I am grateful to David Rydh for his kind support, for the numerous helpful suggestions, and for pointing out errors and inaccuracies.
This work has benefited greatly from conversations with Sam Payne, Andrew Kresch, Gregg Zuckerman, Jarod Alper, Holger Partsch, Felix Sch\"uller and Christian Liedtke.

\subsection*{Conventions and notations}

For algebraic stacks we follow the conventions in \cite{LM00:MR1771927} except that we do not require that the diagonal of an algebraic stack is separated, following \cite{Stacks-Project}. All our stacks will however be quasi-separated, that is, have quasi-compact and quasi-separated diagonal.
A vector bundle is a locally free sheaf of finite type, equivalently a flat and finitely presented quasi-coherent sheaf.

\section{quasi-coherent generators}
\label{gensh:SEC}

In this preliminary section, we define generating families of finitely presented quasi-coherent sheaves, extend the definition to the relative case, and show the usual permanence properties.
In forthcoming sections we restrict entirely to the case of locally free sheaves but for the sake of completeness we treat the general case here.

\begin{defi}\label{genshglo:DEF:generatingfamily}
A family of quasi-coherent $\shO_X$-modules $(\shG_i)_{i \in I }$ is a \newdef{generating family for $X$} by abuse of notation if it is a family of finitely presented generators in the category of all quasi-coherent $\shO_X$-modules $\Qco(X)$. That is, for every quasi-coherent $\shO_X$-module $\shM$ there exists a surjection $\bigoplus_{i \in I} \shG_i^{\oplus n_i} \twoheadrightarrow \shM$.
\end{defi}

\begin{rem}\label{REM:completenessproperty}
The existence of such a family is equivalent to the so called \emph{completeness property}, saying that every quasi-coherent sheaf is the direct limit of finitely presented ones, or in other words that $\Qco(X)$ is compactly generated. This is known to hold for a vast class of algebraic stacks, including (pseudo-) noetherian and qcqf stacks \cite[4.1]{Ryd15:noetherianapprox}. 
\end{rem}

For schemes, a generating family can be given by a suitable family of ideal sheaves, as seen in the following example.

\begin{ex}[{\cite[Prop. 2.2]{SV04:MR2041778}}]\label{EX:ideal_sheaves}
Suppose that $X$ is a noetherian scheme. 
Let $\shI_1, \dots, \shI_n \subset \shO_X$ be a family of ideal sheaves such that $( X-V(\shI_i) )_{1 \leq i \leq n}$ is an affine open covering.
Then the family of all powers $( \shI_i^{j})_{ j \in \bbN, \,1 \leq i \leq n }$ is generating for $X$. If we put $\shG \coloneq \bigoplus_{i=1}^n \shI_i$, then $(\Sym^k(\shG))_{k \in \bbN}$ is also a generating family.
\end{ex}

The definition of a generating family extends to the relative case analogously to relatively ample invertible sheaves. In order to make this precise we provide a formulation in terms of an adjoint pair of functors.

\begin{defi}
Let $f \colon X \to Y$ be a quasi-compact and quasi-separated morphism of algebraic stacks and let $\shG_I = (\shG_i)_{i \in I}$ be a family of finitely presented $\shO_X$-modules. We define an adjoint pair of functors $(\pb{f_{\shG_I}}, \pf{f_{\shG_I}})$ by
\begin{equation}\label{sumtenspb_functor}
\pb{f_{\shG_I}} \colon \Qco(Y)^I \to \Qco(X), \quad (\shN_i)_{i \in I} \mapsto \bigoplus_{i \in I} \shG_i \otimes_{\shO_X} \pb f \shN_i.
\end{equation}
\begin{equation}\label{pfhomfamily_functor}
\pf{f_{\shG_I}} \colon \Qco(X) \to \Qco(Y)^I, \quad \shM \mapsto (\pf f \shHom_{\shO_X}(\shG_i, \shM))_{i \in I}.
\end{equation}
Note that $\shHom_{\shO_X}(\shG_i, \shM)$ is quasi-coherent because $\shG_i$ is of finite presentation. Using the adjunctions $(\pb f, \pf f)$ and $(\shG_i \otimes \cdot , \shHom_{\shO_X}(\shG_i, \cdot))$, $i \in I$, it is straightforward to check that $(\pb{f_{\shG_I}}, \pf{f_{\shG_I}})$ is indeed an adjoint pair.
\end{defi}

\begin{rem}
For an algebraic stack $X$ that possesses a coarse moduli space $X_0$, the case of singleton families that are generating with respect to the natural morphism $\pi \colon X \to X_0$ was studied in \cite[Section 5]{OS03:MR2007396}.
\end{rem}

We present three equivalent ways of constructing relative resolutions.

\begin{lem}\label{LEM:equivdefs} 
Let $f \colon X \to Y$ be a quasi-compact and quasi-separated morphism of algebraic stacks and let $\shG_I = (\shG_i)_{i \in I}$ be a family of finitely presented quasi-coherent $\shO_{X}$-modules. Then the following properties are equivalent:
\begin{enumerate}
  \item \label{LEM:equivdefs:IT:surjection} Every quasi-coherent $\shO_X$-module $\shM$ admits a surjection 
    \begin{equation}
  \bigoplus_{i \in I} \shG_i  \otimes \pb f \shN_i \twoheadrightarrow \shM,
    \end{equation}
  for some family of quasi-coherent $\shO_Y$-modules $(\shN_i)_{i \in I}$. 
  \item \label{LEM:equivdefs:IT:surj_counit}
  The counit $\varepsilon \colon \pb {f_{\shG_I}} \pf {f_{\shG_I}} \Rightarrow \id_{\Qco(X)}$ evaluated at any quasi-coherent $\shO_X$-module $\shM$ is a surjective map
    \begin{equation}
      \varepsilon(\shM) \colon \bigoplus_{i \in I} \shG_i \otimes_{\shO_{X}} \pb f \pf f \shHom_{\shO_{X}}(\shG_i, \shM) \twoheadrightarrow  \shM.    
    \end{equation}
  \item \label{LEM:equivdefs:IT:pf_faithful}  The functor $\pf {f_{\shG_I}}$ is faithful: for every non-zero morphism $\shM_1 \to \shM_2$ in $\Qco(X)$ there exists  $i \in I$ such that the map
  $$\pf f \shHom(\shG_i, \shM_1) \to \pf f \shHom(\shG_i, \shM_2)$$ 
  is non-zero.

\end{enumerate}
\end{lem}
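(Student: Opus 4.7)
The plan is to establish the cycle (i) $\Leftrightarrow$ (ii) $\Rightarrow$ (iii) $\Rightarrow$ (ii) using only formal properties of the adjunction $(\pb{f_{\shG_I}}, \pf{f_{\shG_I}})$, together with the fact that $\pf{f_{\shG_I}}$, being a right adjoint, is left exact.

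For (ii) $\Rightarrow$ (i) there is nothing to prove: take $\shN_i \coloneq \pf f\shHom_{\shO_X}(\shG_i,\shM)$. Conversely, for (i) $\Rightarrow$ (ii), suppose we are given a surjection $\bigoplus_{i\in I}\shG_i\otimes \pb f\shN_i\twoheadrightarrow\shM$. By adjunction it corresponds to a family of morphisms $\shN_i\to \pf f\shHom_{\shO_X}(\shG_i,\shM)$, and the original surjection factors through the counit evaluated at $\shM$, so $\varepsilon(\shM)$ must itself be surjective.

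For (ii) $\Rightarrow$ (iii), let $\varphi\colon\shM_1\to\shM_2$ be such that $\pf{f_{\shG_I}}(\varphi)=0$. By naturality of the counit, $\varphi\circ\varepsilon(\shM_1)=\varepsilon(\shM_2)\circ\pb{f_{\shG_I}}\pf{f_{\shG_I}}(\varphi)=0$. Since $\varepsilon(\shM_1)$ is surjective by assumption, this forces $\varphi=0$, which is the contrapositive of (iii).

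The main step is (iii) $\Rightarrow$ (ii), and the key idea is the triangle identity. Let $\shM\in\Qco(X)$, let $\shK\coloneq\im(\varepsilon(\shM))\subseteq\shM$, and consider the projection $\pi\colon\shM\twoheadrightarrow\shM/\shK$; we must show $\pi=0$. Factor $\varepsilon(\shM)$ as $\pb{f_{\shG_I}}\pf{f_{\shG_I}}(\shM)\twoheadrightarrow\shK\hookrightarrow\shM$. Applying the left exact functor $\pf{f_{\shG_I}}$ yields an injection $\pf{f_{\shG_I}}(\shK)\hookrightarrow\pf{f_{\shG_I}}(\shM)$; but the composite $\pf{f_{\shG_I}}(\shK)\hookrightarrow\pf{f_{\shG_I}}(\shM)\xrightarrow{\pf{f_{\shG_I}}(\varepsilon(\shM))}\pf{f_{\shG_I}}(\shM)$ (through the surjection onto $\shK$) is $\pf{f_{\shG_I}}(\varepsilon(\shM))$, which by the triangle identity $\pf{f_{\shG_I}}(\varepsilon)\circ\eta(\pf{f_{\shG_I}})=\id$ is a split surjection. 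Hence $\pf{f_{\shG_I}}(\shK)\to\pf{f_{\shG_I}}(\shM)$ is both mono and epi, thus an isomorphism. Left exactness applied to $0\to\shK\to\shM\xrightarrow{\pi}\shM/\shK\to 0$ then gives $\pf{f_{\shG_I}}(\pi)=0$, and faithfulness in (iii) forces $\pi=0$, as required. I expect the only subtle point is ensuring all the diagram chases carry over in $\Qco(X)$ for a quasicompact quasiseparated algebraic stack, which is automatic since $\Qco(X)$ is an abelian category and $\pf{f_{\shG_I}}$ is left exact as a right adjoint.
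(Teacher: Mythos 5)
Your proof is correct and follows essentially the same route as the paper: (i) $\Leftrightarrow$ (ii) via factoring any map $\pb{f_{\shG_I}}((\shN_i)_i)\to\shM$ through the counit, and (ii) $\Leftrightarrow$ (iii) as the formal adjunction fact that a right adjoint is faithful iff every component of the counit is an epimorphism, which the paper simply cites from Pareigis while you write it out. The only blemish is in (iii) $\Rightarrow$ (ii), where the displayed composite does not typecheck (the source of $\pf{f_{\shG_I}}(\varepsilon(\shM))$ is $\pf{f_{\shG_I}}\pb{f_{\shG_I}}\pf{f_{\shG_I}}(\shM)$, not $\pf{f_{\shG_I}}(\shM)$); the intended factorization $\pf{f_{\shG_I}}(\varepsilon(\shM))=\pf{f_{\shG_I}}(m)\circ\pf{f_{\shG_I}}(e)$ through the image $\shK$, combined with the triangle identity, does make the argument go through.
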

\begin{proof}
Clearly, \ref{LEM:equivdefs:IT:surj_counit} implies \ref{LEM:equivdefs:IT:surjection}, and the converse holds because,  by adjunction, every map $\varphi \colon \pb{f_{\shG_I}}((\shN_i)_{i \in I}) \to \shM$ factors over the counit $\varepsilon$ by $\pb{f_{\shG_I}}( \varphi^\flat)$, where $\varphi^{\flat} : (\shN_i)_{i \in I} \to \pf {f_{\shG_I}}(\shM)$ is the adjoint of $\varphi$.
The equivalence \ref{LEM:equivdefs:IT:surj_counit} $\Leftrightarrow$ \ref{LEM:equivdefs:IT:pf_faithful} is a formal consequence of adjunction (see \cite[2.3]{Par70:MR0265428}).
\end{proof}

\begin{defi}\label{DEF:rel_gen_family}
Let $f \colon X \to Y$ be a morphism of algebraic stacks. 
A family of quasi-coherent $\shO_X$-modules $\shG_I = (\shG_i)_{i \in I}$  is \newdef{$f$-generating} if $f$ is quasi-compact and quasi-separated, each $\shG_i$ is finitely presented, and the equivalent conditions in Lemma \ref{LEM:equivdefs} hold.
If $Y$ is affine, the definition is the same as in the absolute case (see \ref{genshglo:DEF:generatingfamily}).
We call the family $\shG_I$ \newdef{universally $f$-generating} if for every morphism of algebraic stacks $Y' \to Y$ the family of restricted sheaves 
$\shG_I|_{X \times_Y Y'} \coloneq (\shG_i|_{X \times_Y Y'})_{i\in I}$ is generating for the base change $f_{Y'} \colon X' \to Y'$.
\end{defi}

We begin with the usual sorites for (universally) generating families with respect to morphisms.
\begin{prop}\label{PROP:permanence}
 Let $S$ be an algebraic stack, let $f \colon X \to Y$  be a morphism of algebraic $S$-stacks, and let $\shG_I=(\shG_i)_{i \in I}$ be a family of quasi-coherent $\shO_X$-modules.
\begin{enumerate}
\item \label{PROP:permanence:IT:2iso}
The family $\shG_I$ is (universally) $f$-generating if and only if $\shG_I$ is (universally) $f'$-generating for every $2$-isomorphic morphism $f' \colon X \to Y$. 
\item \label{PROP:permanence:IT:quaff} 
The singelton family $\shO_X$ is universally $f$-generating if $f$ is quasi-affine (for instance, if $f$ is an affine, finite, or quasi-finite representable separated morphism, a finite-type monomorphism, a quasi-compact open immersion, or a closed immersion).
\item \label{PROP:permanence:IT:local}
\emph{$\fpqc$-local on the target:} Let $(S_\alpha \to S)$ be an $\fpqc$ covering family (or a faithfully flat family, $f$ is quasi-compact and quasi-separated, and each $\shG_i$ is finitely presented). If the restricted family  $\shG_I|_{X_{(S_\alpha)}}$ is generating for $f_{(S_\alpha)} \colon X_{(S_\alpha)} \to Y_{(S_\alpha)}$ and each $\alpha \in A$, then $\shG_I$ is $f$-generating.
\item \label{PROP:permanence:IT:basechange}
\emph{Base change:} Let $S' \to S$ be a morphism of algebraic stacks such that $S$ has quasi-affine diagonal. 
If $\shG_I$ is $f$-generating, then the restricted family $\shG_I|_{X_{(S')}}$ is generating for $f_{(S')}\colon X_{(S')} \to Y_{(S')}$. 
\item  \label{PROP:permanence:IT:composition}
\emph{Composition:}
Let $g \colon Y \to Z$ be a morphism of algebraic $S$-stacks, and let $\shE_J = (\shE_j)_{j\in J}$ be a family of quasi-coherent $\shO_Y$-modules.
If $\shG_I$ is (universally) $f$-generating and if $\shE_J$ is (universally) $g$-generating, then the family
 \[\shG_I \otimes \pb f {\shE_J} \coloneq \left(\shG_i \otimes \pb f \shE_j \right)_{(i,j )\in I \times J} \]
is (universally) generating for $g \circ f$.
\item  \label{PROP:permanence:IT:lcp}
\emph{Left-cancellation property:}
Suppose that $g$ is quasi-separated (resp. $\Delta_g$ quasi-affine).
If $\shG_I$ is $g \circ f$-generating (resp.~univ. $g \circ f$-generating), then $\shG_I$ is $f$-generating (resp.~universally $f$-generating).
\item \label{PROP:permanence:IT:products}
\emph{Products:}
Let $f_\alpha \colon X_\alpha \to Y_\alpha$, $\alpha = 1,2$, be morphisms of algebraic $S$-stacks and denote by $p_\alpha \colon X_1 \times_S X_2 \to X_\alpha$ the projections. If $\shG_{I_\alpha}^{(\alpha)}$, $\alpha = 1,2$, are universally $f_\alpha$-generating families on $X_\alpha$, then the family \[\shG_{I_1}^{(1)} \boxtimes_S \shG_{I_2}^{(2)} \coloneq \left(\pb \pr_1 \shG_{i_1}^{(1)} \otimes \pb \pr_2 \shG_{i_2}^{(2)}\right)_{(i_1, i_2) \in  I_1 \times I_2}\]
 is universally generating for $f_1 \times_S f_2 \colon X_1 \times_S X_2 \to Y_1 \times_S Y_2$.
\item \label{PROP:permanence:IT:reduction}
\emph{Reduction:}
 If $\shG_I$ is (universally) $f$-generating, then the restricted family $\shG_I|_{X_{\red}}$ is (universally) generating  for $f_{\red} \colon X_{\red} \to Y_{\red}$.
\end{enumerate}
\end{prop}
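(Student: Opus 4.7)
The plan is to translate each item into a statement about the surjectivity of the counit $\varepsilon(\shM) \colon \pb{f_{\shG_I}}\pf{f_{\shG_I}}\shM \twoheadrightarrow \shM$ from Lemma \ref{LEM:equivdefs}, and then apply standard manipulations of the adjoint pair $(\pb{f_{\shG_I}}, \pf{f_{\shG_I}})$ together with elementary compatibilities of pullback with tensor products.

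For \refitem{PROP:permanence:IT:2iso}, a $2$-isomorphism $f \cong f'$ induces a natural isomorphism $\pb f \cong \pb{f'}$, hence $\pb{f_{\shG_I}} \cong \pb{f'_{\shG_I}}$, transferring surjectivity of the counit from one to the other. For \refitem{PROP:permanence:IT:quaff}, the singleton $\{\shO_X\}$ is $f$-generating precisely when the counit $\pb f \pf f \shM \twoheadrightarrow \shM$ is surjective on every quasicoherent $\shM$, which is the classical characterisation of quasiaffine morphisms; the stability of quasiaffineness under arbitrary base change then yields the universal version. For \refitem{PROP:permanence:IT:local}, surjectivity of $\varepsilon(\shM)$ is checked fpqc-locally on $S$, using that $\pf{f_{\shG_I}}$ and tensor products commute with faithfully flat pullback under the qcqs and finite-presentation hypotheses.

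The composition item \refitem{PROP:permanence:IT:composition} follows by a direct iteration: an $f$-generating surjection $\bigoplus_{i} \shG_i \otimes \pb f \shN_i \twoheadrightarrow \shM$ on $X$, refined by $g$-generating surjections $\bigoplus_{j} \shE_j \otimes \pb g \shP_{i,j} \twoheadrightarrow \shN_i$ on $Y$ and pulled back via $f$, yields $\bigoplus_{i,j} \shG_i \otimes \pb f \shE_j \otimes \pb{(gf)}\shP_{i,j} \twoheadrightarrow \shM$ thanks to $\pb f \pb g = \pb{(gf)}$. The left-cancellation item \refitem{PROP:permanence:IT:lcp} is equally formal: any $gf$-resolution of $\shM$ is simultaneously an $f$-resolution with coefficient family $(\pb g \shN_i)$, and the quasiseparatedness (resp.\ quasiaffine diagonal) of $g$ ensures that $f$ inherits the qcqs hypothesis from $gf$ via the factorization through the graph $\Gamma_f$.

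For \refitem{PROP:permanence:IT:basechange}, the universally generating case is essentially immediate: $Y_{(S')} \to Y$ is itself a base change in the sense of the definition, and any further base change of $X_{(S')} \to Y_{(S')}$ factors through $Y_{(S')} \to Y$. The absolute case invokes the stronger hypothesis that $\Delta_S$ is quasiaffine, in order to obtain the base-change formulas for $\pb f$ and $\pf f$ needed to transport surjectivity of the counit along $S' \to S$. Items \refitem{PROP:permanence:IT:products} and \refitem{PROP:permanence:IT:reduction} then follow by combining the earlier ones: factor $f_1 \times_S f_2 = (f_1 \times_S \id_{Y_2}) \circ (\id_{X_1} \times_S f_2)$ so that each factor is a base change of some $f_\alpha$, then apply \refitem{PROP:permanence:IT:basechange} and \refitem{PROP:permanence:IT:composition}; for \refitem{PROP:permanence:IT:reduction} apply \refitem{PROP:permanence:IT:basechange} to the closed immersion $Y_{\red} \hookrightarrow Y$. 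The principal obstacle is the absolute case of \refitem{PROP:permanence:IT:basechange}, where the diagonal hypothesis on $S$ is genuinely used; the other items reduce to formal manipulations of the adjoint pair and standard behaviour of pullback along qcqs morphisms.
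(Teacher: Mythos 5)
Your overall strategy --- reducing each item to surjectivity of the counit from Lemma \ref{LEM:equivdefs} and manipulating the adjoint pair --- is the same as the paper's (which mostly works with the equivalent faithfulness criterion \ref{LEM:equivdefs}.\ref{LEM:equivdefs:IT:pf_faithful}), and your treatments of \ref{PROP:permanence:IT:2iso}, \ref{PROP:permanence:IT:quaff}, \ref{PROP:permanence:IT:local}, \ref{PROP:permanence:IT:composition} and \ref{PROP:permanence:IT:products} are essentially correct. The gaps occur at the two places where the argument is not formal. In the non-universal case of \ref{PROP:permanence:IT:basechange} you propose to ``transport surjectivity of the counit along $S' \to S$'' via ``base-change formulas for $\pb f$ and $\pf f$''. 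There is no base-change formula for $\pf f$ along an arbitrary, in particular non-flat, morphism $S' \to S$, so this step fails as stated. The quasiaffine-diagonal hypothesis is not used to produce such a formula; the actual argument is a bootstrap: cover $S'$ by affine schemes $S'_\alpha$, note that each composite $S'_\alpha \to S$ is a quasiaffine \emph{morphism} because $\Delta_S$ is quasiaffine and $S'_\alpha$ is affine, hence $X_{(S'_\alpha)} \to X$ is quasiaffine; then \ref{PROP:permanence:IT:quaff} and \ref{PROP:permanence:IT:composition} give generation for $X_{(S'_\alpha)} \to X \to Y$, left cancellation \ref{PROP:permanence:IT:lcp} (applied to $Y_{(S'_\alpha)} \to Y$, which has quasiaffine diagonal) gives generation for $X_{(S'_\alpha)} \to Y_{(S'_\alpha)}$, and one descends along the faithfully flat family $(S'_\alpha \to S')$ using \ref{PROP:permanence:IT:local}.

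Second, in \ref{PROP:permanence:IT:lcp} your direct transfer of a $gf$-resolution to an $f$-resolution with coefficients $(\pb g \shN_i)$ settles only the non-universal case: a base change $Y' \to Y$ of $f$ is not a base change of $g\circ f$ along anything over $Z$, so the universal statement does not follow by the same formal move. The graph factorization $f = q \circ \Gamma_f$, which you invoke only to secure the qcqs hypothesis, is in fact the substance of the universal argument: $\Gamma_f$ is quasiaffine because $\Delta_g$ is, $q \colon X\times_Z Y \to Y$ \emph{is} a base change of $g \circ f$, and one concludes by \ref{PROP:permanence:IT:quaff} and \ref{PROP:permanence:IT:composition}. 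A smaller point: deducing \ref{PROP:permanence:IT:reduction} from \ref{PROP:permanence:IT:basechange} with $S'=Y_\red$ both imports a diagonal hypothesis on the base that is not assumed in \ref{PROP:permanence:IT:reduction} and only yields generation for $X\times_Y Y_\red \to Y_\red$, which differs from $X_\red \to Y_\red$ in general; the intended route is \ref{PROP:permanence:IT:composition} applied to the closed immersion $X_\red \hookrightarrow X$ followed by \ref{PROP:permanence:IT:lcp} applied to $Y_\red \hookrightarrow Y$.
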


\begin{rem}
Let $P$ be a property of finitely presented sheaves which is local and satisfies $\fpqc$-descent (e.g. ``locally free''). Then the permanence properties shown in Proposition \ref{PROP:permanence}, carry over to (universally) relatively generating families of finitely presented sheaves satisfying $P$ \emph{mutatis mutandis}.
\end{rem}
\begin{proof}[Proof of \ref{PROP:permanence}]---

\emph{Proof of \ref{PROP:permanence:IT:2iso}}: The universal case reduces to the non-universal case, which follows from Lemma \ref{LEM:equivdefs}.\ref{LEM:equivdefs:IT:pf_faithful} because faithfulness of a functor is preserved and reflected under $2$-isomorphisms.

\bigskip

\emph{Proof of \ref{PROP:permanence:IT:local}}: 
We may assume that $S=Y$ by restricting the faithfully flat covering $(S_\alpha \to S)$ along $Y \to S$. Given a faithfully flat covering $u_\alpha \colon  Y_\alpha \to Y$, consider for each $\alpha$ the induced $2$-cartesian square
\begin{equation}
\xymatrix{
  X_\alpha \ar[r]^{v_\alpha} \ar[d]_{f_\alpha}   \ar@{}[dr]|{\square} &	X \ar[d]^f	\\
  Y_\alpha  \ar[r]^{u_\alpha}  & Y
}
\end{equation}
Let $i \in I$ be given. By assumption, $f_\alpha$ is quasi-compact and quasi-separated, and each $\pb v_\alpha \shG_i$ is finitely presented.
Then $f$ is quasi-compact and quasi-separated and $\shG_i$ is finitely presented by $\fpqc$ descent (or by assumption). 
Thus the following diagram consists of well-defined functors:
\begin{equation}
\xymatrix{
\Qco(X) \ar[r]^{\pb{v_\alpha}} \ar[d]_{\shHom_{\shO_X}(\shG_i, \cdot )} & \Qco(X_\alpha) \ar[d]^{\shHom_{\shO_{X_\alpha}}(\pb {v_\alpha}\shG_i, \cdot )} \\
\Qco(X) \ar[r]^{\pb{v_\alpha}} \ar[d]^{\pf f} & \Qco(X_\alpha) \ar[d]^{\pf{ f_\alpha}} \\
\Qco(Y) \ar[r]^{\pb{u_\alpha}} & \Qco(Y_\alpha) \\
}
\end{equation}
The upper square is $2$-commutative since $\shG_i$ and $\pb {v_\alpha} \shG_i$ are of finite presentation and $\pb {v_\alpha}$ commutes with the internal hom's by flatness. 
The lower square is $2$-commutative by flat base change \cite[13.1.9]{LM00:MR1771927}.
Thus, the whole diagram is $2$-commutative. 
The assertion follows now by a simple diagram chase:
Since $(v_\alpha)$ is a faithfully flat covering family for $X$, the induced pullback functor  $v_A \colon \Qco(X) \to \prod_\alpha \Qco(X_\alpha), \shM \mapsto \pb {v_\alpha}(\shM)$ is  faithful. 
Similarly we get a faithful functor $u_A \colon \Qco(Y) \to \prod_\alpha \Qco(Y_\alpha)$.
For each $\alpha$ let $\pb{v_\alpha}\shG_I$ be the family of restricted sheaves $(\pb{v_\alpha}\shG_i \mid i \in I)$, which is generating for $f_\alpha$ by hypothesis. Thus, $\pf {((f_\alpha)_{\pb{v_\alpha}\shG_I})} \colon \Qco(X_\alpha) \to \Qco(Y_\alpha)^I$ is faithful for each $\alpha$. 
We conclude that the composition
\[ \Qco(X) \xrightarrow{v_A} \prod_\alpha \Qco(X_\alpha) \xrightarrow{\left(\pf {((f_\alpha)_{\pb{v_\alpha}\shG_I})}\right)_{\alpha}} \prod_\alpha \Qco(Y_\alpha)^I \xrightarrow{\simeq} \left(\prod_\alpha \Qco(Y_\alpha)\right)^I \]
is faithful, \emph{a fortiori} this holds for the $2$-isomorphic functor
\[ \Qco(X) \xrightarrow{\pf {f_{\shG_I}}} \Qco(Y)^I \xrightarrow{(\pb{u_A})^I} \left(\prod_\alpha \Qco(Y_\alpha)\right)^I \]
By the left cancellation property for faithful functors, we conclude that $\pf {f_{\shG_I}}$ is faithful, too. Thus $\shG_I$ is $f$-generating.

\bigskip

 \emph{Proof of \ref{PROP:permanence:IT:quaff}:} As the property ``quasi-affine'' is stable under arbitrary base change, it suffices to show that $\shO_X$ is $f$-generating. By hypothesis there is a smooth covering $Y' \to Y$ by a scheme $Y'$ such that the base change $f' \colon X'\coloneq X \times_Y Y' \to Y'$ is quasi-affine. Thus, $\shO_{X'}$ is $f'$-ample \cite[5.1.2]{egaII}, hence $f'$-generating. So by \ref{PROP:permanence:IT:local} we conclude that $\shO_X$ is $f$-generating.

\bigskip

\emph{Proof of \ref{PROP:permanence:IT:composition}}:
It suffices to treat the non-universal case by replacing $g \circ f \colon X \to Y \to Z$ for a given $Z' \to Z$ with the base change $g' \circ f' \colon X' \to Y' \to Z'$ and using the isomorphisms
$(\shG_i \otimes_{\shO_X} \pb f \shE_j)|_{X'} \simeq \shG_i|_{X'} \otimes_{\shO_{X'}} \pb{f'}\shE_j|_{Y'}$ for all $(i,j) \in I \times J$.
By assumption $f$ and $g$ are quasi-compact and quasi-separated, so the same holds for $g \circ f$. 
Let $(i,j) \in I \times J$ be given.
Since $\shG_i$ and $\shE_j$ are of finite presentation, so is $\shG_i \otimes \pb f \shE_j$.
Then we get a diagram of well-defined functors:
\begin{equation}\label{composition_eq1}
\xymatrix@C=5pc@R=1pc{ 
  \Qco(X) \ar[r]^{\shHom(\shG_i, \cdot)}\ar[dr]_{\shHom(\shG_i \otimes \pb f \shE_j, \cdot) \qquad \qquad } & \Qco(X)  \ar[d]^{\shHom(\pb f \shE_j, \cdot)}\ar[r]^{\pf f} & 
  \Qco(Y) \ar[d]^{\shHom(\shE_j, \cdot)} \\
  &  
  \Qco(X) \ar[r]^{\pf f} \ar[dr]_{\pf {(g \circ f)}} & 
  \Qco(Y) \ar[d]^{\pf g} \\
  & 
  & 
  \Qco(Z)  
}
\end{equation}
The upper left triangle is $2$-commutative by adjunction of  $\shG_i \otimes\cdot $ and $\shHom_{\shO_X}(\shG_i, \cdot)$ in $\Qco(X)$. The lower triangle is $2$-commutative by definition.
The square is $2$-commutative since it corresponds by adjunction to 
the isomorphism $\pb f ( \shE_j \otimes_{\shO_Y} \cdot) \simeq \pb f \shE_j \otimes_{\shO_X} \pb f(\cdot)$. 
Thus, the whole diagram is $2$-commutative.
It follows that the composition 
\[
 \left(\pf {g_{\shE_J}} \right)^I \circ \pf {f_{\shG_I}}  \colon \Qco(X) \xrightarrow{\pf {f_{\shG_I}}} \Qco(Y)^I \xrightarrow{\left(\pf {g_{\shE_J}} \right)^I} (\Qco(Z)^J)^I \xrightarrow{\simeq} \Qco(Z)^{I \times J}
\]
is $2$-isomorphic to the functor 
\[
 \pf {\left((g\circ f)_{\shG_I \otimes \pb f \shE_J}\right)}  \colon \Qco(X) \to \Qco(Z)^{I \times J}.
\]
By hypothesis, $\pf {f_{\shG_I}}$ and $\pf {g_{\shE_J}}$ are faithful. Then the constant functor $\left(\pf {g_{\shE_J}} \right)^I$ is faithful and so is the composition $\left(\pf {g_{\shE_J}} \right)^I \circ \pf {f_{\shG_I}} \simeq \pf {\left((g\circ f)_{\shG_I \otimes \pb f \shE_J}\right)}$ as required.
\bigskip

\emph{Proof of \ref{PROP:permanence:IT:lcp}}: Let us first prove the non-universal case.
By assumption, $\shG_I$ is a family of finitely presented $\shO_X$-modules. Since $g \circ f$ is quasi-compact and quasi-separated, it follows that $f$ is quasi-compact and quasi-separated since $\Delta_g$ is by assumption.
Consider now diagram \eqref{composition_eq1} with the singleton family $\shE = \shO_Y$. Since we do not assume that $g$ is quasi-compact, the functor $\pf g \colon \Qco(Y) \to \Qco(Z)$ does not make sense, but $\pf g \colon \Qco(Y) \to \Mod(Z)$ does. Thus, we replace $\Qco(Z)$ with $\Mod(Z)$, so that the lower triangle is again well-defined. As above, a diagram chase shows us that 
\[
 (\pf g)^I \circ  \pf {f_{\shG_I}} \colon \Qco(X) \to \Qco(Y)^I \to \Mod(Z)^{I}
\]
is $2$-isomorphic to 
\[
 i^I \circ \pf {(g \circ f)_{\shG_I}} \colon \Qco(X) \to \Qco(Z)^I \to \Mod(Z)^I
\]
By hypothesis $\pf {(g \circ f)_{\shG_I}}$ is faithful. Also, the constant functor $i^I$ is faithful.
Thus, $(\pf g)^I \circ  \pf {f_{\shG_I}}$, a \emph{fortiori} the factor $\pf {f_{\shG_I}}$ is faithful as asserted.

For the universal case we use the standard argument that $f$ factors up to $2$-isomorphism as the composition of the upper horizontal morphisms of the following two $2$-cartesian squares:
\begin{equation}
\xymatrix{
X \ar[r]^{\Gamma_f}\ar[d]^f \ar @{} [dr] |{\square}	& X \times_Z Y \ar[d]^{f\times1}
	&& X \times_Z Y \ar[r]^{q}\ar[d]^{p} \ar @{} [dr] |{\square}		& Y \ar[d]^g \\
Y \ar[r]^{\Delta_g}		& Y \times_Z Y
	&& X \ar[r]^{g \circ f}						& Z \\
}
\end{equation}
By hypothesis $\Delta_g$ and hence $\Gamma_f$ are quasi-affine. Thus, $\shO_X$ is universally $\Gamma_f$-generating by  \ref{PROP:permanence:IT:quaff}.
Since $\shG_I$ is universally $g \circ f$-generating by assumption, $\pb p \shG_I$ is universally $q$-generating. So by  applying \ref{PROP:permanence:IT:composition} to $f = q \circ \Gamma_f$ we conclude that $\lbrace\shO_X\rbrace \otimes \pb{\Gamma_f} (\pb p \shG_I)$ is universally $f$-generating. But in light of the identity $p \circ \Gamma_f = \id_X$ this means that $\shG_I$ is universally $f$-generating.
 
\bigskip

\emph{Proof of \ref{PROP:permanence:IT:basechange}}: 
Choose a smooth covering family $S'_\alpha \to S'$ of affine scheme $S'_\alpha$. Then each composition $S'_\alpha \to S' \to S$ is a quasi-affine morphism because $\Delta_{S/\bbZ}$ is quasi-affine.
Then the base change $Y_{(S'_\alpha )} \to Y$ and $X_{(S'_\alpha )}  \to X$ are quasi-affine, too, so that $\shO_{Y_{(S'_\alpha )}}$ and $\shO_{X_{(S'_\alpha )}}$ are relatively generating by \ref{PROP:permanence:IT:quaff}.
It follows that the family of restricted sheaves $\shG_I|_{X_{(S'_\alpha)}}$ is generating for the composition $X_{(S'_\alpha)} \to X \to Y$ by \ref{PROP:permanence:IT:composition}, \emph{a fortiori} for the $2$-isomorphic morphism $X_{(S'_\alpha)} \to Y_{(S'_\alpha)} \to Y$ and hence for $X_{(S'_\alpha)} \to Y_{(S'_\alpha)}$ by \ref{PROP:permanence:IT:lcp} because $Y_{(S'_\alpha)} \to Y$ has quasi-affine diagonal.
Then \ref{PROP:permanence:IT:local} implies that $\shG_I|_{(S')}$ is $f_{(S')}$-generating.

\bigskip

\emph{Proof of \ref{PROP:permanence:IT:products}}: 
The product morphism $f_1 \times_S f_2$ is the decomposition of the upper horizontal morphisms of the following $2$-cartesian squares, where $p_\alpha, q_\alpha, r_\alpha$ denote the projections on the $\alpha$-th factor:
\begin{equation}
\xymatrix{
X_1 \times_S X_2 \ar[r]^{(f_1,\id_{X_2})}\ar[d]^{p_1} \ar @{} [dr] |{\square}	& Y_1 \times_S X_2 \ar[d]^{q_1}
	&& Y_1 \times_S X_2 \ar[r]^{(\id_{Y_1},f_2)}\ar[d]^{q_2} \ar @{} [dr] |{\square}		& Y_1 \times_S Y_2 \ar[d]^{r_2} \\
X_1 \ar[r]^{f_1}		& Y_1
	&& X_2 \ar[r]^{f_2}						& Y_2 \\
}
\end{equation}
Then the family $\pb{p_1}\shG_{I_1}^{(1)}$ is universally $(f_1,\id_{X_2})$-generating, and the family $\pb {q_2}\shG_{I_2}^{(2)}$ is  universally $(\id_{Y_1},f_2)$-generating. 
Hence $\pb{p_1}\shG_{I_1}^{(1)} \otimes \pb{(f_1,\id_{X_2})}\left(\pb {q_2} \shG_{I_2}^{(2)}\right)$ is universally $f_1 \times_S f_2$-generating by \ref{PROP:permanence:IT:composition}. But due to $q_2 \circ (f_1,\id_{X_2}) \simeq p_2$ we can identify the right factor of the latter tensor product with the family $\pb{p_2}\shG_{I_2}^{(2)}$. This proves the assertion.

\bigskip

\emph{Proof of \ref{PROP:permanence:IT:reduction}}:
The morphisms $f$ and $f_{\red}$ fit in a $2$-commutative square, where the horizontal morphisms are closed immersions:
\begin{equation}
\xymatrix{
  X_\red \ar[r]^{v} \ar[d]_{f_\red} &	X \ar[d]^f	\\
  Y_\red  \ar[r]^{u}  & Y
}
\end{equation}
Since $\shO_{X_\red}$ is universally $v$-generating and $u$ has quasi-affine diagonal, the assertion is a consequence of \ref{PROP:permanence:IT:composition} applied to $f \circ v$ and \ref{PROP:permanence:IT:lcp} applied to $u \circ f_{\red}$.
\end{proof}

\begin{cor}\label{COR:generating=univgenerating}
Let $f \colon X \to Y$ be a quasi-compact and quasi-separated morphism of algebraic stacks.
If $Y$ has quasi-affine diagonal, then every $f$-generating family is universally $f$-generating.
\end{cor}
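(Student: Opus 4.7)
The plan is to observe that this corollary is an immediate consequence of the base change property \refitem{PROP:permanence:IT:basechange} in Proposition \ref{PROP:permanence}, applied with the base stack $S$ taken to be $Y$ itself.

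More precisely, I would view $f \colon X \to Y$ as a morphism of algebraic $Y$-stacks, where $X$ is regarded as a $Y$-stack via $f$ and $Y$ is regarded as a $Y$-stack via $\id_Y$. Given any morphism of algebraic stacks $Y' \to Y$, I set $S = Y$ and $S' = Y'$ in \refitem{PROP:permanence:IT:basechange}. The base changes then read $X_{(S')} = X \times_Y Y'$ and $Y_{(S')} = Y \times_Y Y' \simeq Y'$, and the induced morphism $f_{(S')} \colon X_{(S')} \to Y_{(S')}$ agrees up to $2$-isomorphism with the base change $f_{Y'} \colon X \times_Y Y' \to Y'$.

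The hypothesis of \refitem{PROP:permanence:IT:basechange} requires that $S$ have quasiaffine diagonal, which is exactly the assumption placed on $Y$. Since $\shG_I$ is $f$-generating by hypothesis, \refitem{PROP:permanence:IT:basechange} yields that $\shG_I|_{X_{(S')}}$ is generating for $f_{(S')}$. As the morphism $Y' \to Y$ was arbitrary, this verifies the defining condition of universal $f$-generation in Definition \ref{DEF:rel_gen_family}, and so $\shG_I$ is universally $f$-generating.

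No serious obstacle is expected, since everything is packaged into the base change clause of the previous proposition; the only thing to notice is that the diagonal hypothesis on $Y$ lets us take $S = Y$, which is precisely the role that ``quasiaffine diagonal'' plays in the non-universal version of \refitem{PROP:permanence:IT:basechange}.
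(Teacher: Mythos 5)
Your proposal is correct and is precisely the intended argument: the paper states the corollary immediately after Proposition \ref{PROP:permanence} with no separate proof, and the specialization $S = Y$, $S' = Y'$ in the non-universal clause of \refitem{PROP:permanence:IT:basechange} (whose hypothesis of quasiaffine diagonal on $S$ is exactly the hypothesis on $Y$) gives the claim for every $Y' \to Y$, which is the definition of universal $f$-generation.
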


\begin{rem}\label{REM:permanence} For families of quasi-coherent sheaves on algebraic stacks without quasi-affine diagonal the properties ``universally generating'' and ``generating'' do not coincide. For a quasi-separated morphism $f \colon X \to Y$, the structure sheaf $\shO_X$ is generating for $\Delta_f \colon X \to X \times_Y X$, by applying Proposition \ref{PROP:permanence}.\ref{PROP:permanence:IT:lcp}  to the factorization $\id_X = \pr_1 \circ \Delta_f$. However, $\shO_X$ is not necessarily universally $\Delta_f$-generating.
To give a counterexample, let $A \overset{\pi}\rightarrow \Spec k$ be an abelian scheme of positive dimension.
Then the trivial torsor $p \colon \Spec k \to \clsp A$ induces a $2$-cartesian square
\begin{equation}
\xymatrix{
A \ar[r]\ar[d]^\pi \ar@{}[dr]|{\square} & \clsp A \ar[d]^{\Delta} \\
\Spec k \ar[r]^-{(p,p)}	& \clsp A \times_k \clsp A 	 \\
}
\end{equation}
Although $\shO_{\clsp A}$ is $\Delta$-generating, $\shO_{A}$ is not $\pi$-generating (equivalently $\pi$-ample) since $A$ is not quasi-affine. Hence, $\shO_{\clsp A}$ is not universally $\Delta$-generating.
\end{rem}

As expected, the property ``universally generating'' can be tested over affines:
\begin{prop}\label{PROP:test_on_affines}
Let $f \colon X \to Y$ be a morphism of algebraic stacks and let $\shG_I = (\shG_i)_{i \in I}$ be a family of quasi-coherent $\shO_X$-modules. Then the following properties are equivalent:
\begin{enumerate}
  \item \label{PROP:test_on_affines:IT:1} 
    $\shG_I$ is universally $f$-generating.
  \item \label{PROP:test_on_affines:IT:2} 
    For every morphism $\Spec A \to Y$, the family of restricted sheaves $\shG_I|_{X_A}$ is generating for $X_A$.
  \item \label{PROP:test_on_affines:IT:3} 
    There exists an $\fpqc$ covering family $(Y_\alpha \to Y)$  of algebraic stacks $Y_\alpha$ with quasi-affine diagonal such that each restricted family  $\shG_I|_{(Y_\alpha)}$ is generating for $X_{(Y_\alpha)} \to Y_\alpha$.
\end{enumerate}
\end{prop}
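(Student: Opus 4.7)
The plan is to prove the implications (i) $\Rightarrow$ (ii) $\Rightarrow$ (iii) $\Rightarrow$ (i), leaning entirely on the permanence package established in Proposition \ref{PROP:permanence}.

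The implication (i) $\Rightarrow$ (ii) is immediate: by definition of universal $f$-generation, for every $Y' = \Spec A \to Y$ the restricted family $\shG_I|_{X_A}$ is generating for $f_{\Spec A} \colon X_A \to \Spec A$, and by Definition \ref{DEF:rel_gen_family} this coincides with being generating for the stack $X_A$ itself.

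For (ii) $\Rightarrow$ (iii), I would pick any smooth presentation $\coprod_\alpha \Spec A_\alpha \twoheadrightarrow Y$ by affine schemes. Each $Y_\alpha \coloneq \Spec A_\alpha$ has affine (in particular quasiaffine) diagonal, the family $(Y_\alpha \to Y)$ is a smooth and hence $\fpqc$ covering, and by hypothesis $\shG_I|_{X_{(Y_\alpha)}}$ is generating for $X_{(Y_\alpha)}$, i.e.\ for the morphism $f_{(Y_\alpha)}$.

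The main work is in (iii) $\Rightarrow$ (i). I need to show that for an arbitrary morphism $Y' \to Y$ of algebraic stacks the restricted family $\shG_I|_{X_{(Y')}}$ is generating for $f_{(Y')}$. The strategy is to pull back the given covering to $Y'$ to obtain the $\fpqc$ covering $(Y_\alpha \times_Y Y' \to Y')$, and then to verify the generating property on each member of this refined covering, so that Proposition \ref{PROP:permanence}.\refitem{PROP:permanence:IT:local} can be invoked. For this verification, I would apply Proposition \ref{PROP:permanence}.\refitem{PROP:permanence:IT:basechange} with base $S = Y_\alpha$ and base change $S' = Y_\alpha \times_Y Y' \to Y_\alpha$: since $Y_\alpha$ has quasiaffine diagonal by assumption in (iii), the hypothesis of base change is met, and we conclude that $\shG_I|_{X_{(Y_\alpha \times_Y Y')}}$ is generating for $X_{(Y_\alpha \times_Y Y')} \to Y_\alpha \times_Y Y'$. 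Finally, $\fpqc$-local descent on the target (Proposition \ref{PROP:permanence}.\refitem{PROP:permanence:IT:local}) assembles these into the statement that $\shG_I|_{X_{(Y')}}$ is $f_{(Y')}$-generating. As $Y'$ was arbitrary, $\shG_I$ is universally $f$-generating.

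The only subtlety worth flagging is bookkeeping: making sure at each step that the $\fpqc$-local and base change properties are applied with the correct ``$S$'' of quasiaffine diagonal, so that the universal hypothesis (iii) propagates through the unknown base change $Y' \to Y$. No genuinely new ingredient beyond Proposition \ref{PROP:permanence} is needed.
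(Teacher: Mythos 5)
Your proposal is correct and follows essentially the same route as the paper: the paper also dismisses (i) $\Rightarrow$ (ii) $\Rightarrow$ (iii) as immediate and proves (iii) $\Rightarrow$ (i) by combining the base change property (Proposition \ref{PROP:permanence}.\ref{PROP:permanence:IT:basechange}, using the quasiaffine diagonal of $Y_\alpha$) with $\fpqc$ descent on the target (Proposition \ref{PROP:permanence}.\ref{PROP:permanence:IT:local}). The only cosmetic difference is that the paper first upgrades each $\shG_I|_{(Y_\alpha)}$ to a \emph{universally} generating family and then applies the universal form of the descent statement, whereas you unwind the universal quantifier over an arbitrary $Y' \to Y$ by hand; these are the same argument.
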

\begin{proof} The implications
\ref{PROP:test_on_affines:IT:1} $\Rightarrow$ \ref{PROP:test_on_affines:IT:2} $\Rightarrow$ \ref{PROP:test_on_affines:IT:3} are trivial.
For \ref{PROP:test_on_affines:IT:3} $\Rightarrow$ \ref{PROP:test_on_affines:IT:1} note that for each $\alpha$, the restriction $\shG_I|_{(Y_\alpha)}$ is universally generating for $X_{(Y_\alpha)} \to  Y_\alpha$ by Corollary \ref{COR:generating=univgenerating} using that $Y_\alpha$ has quasi-affine diagonal. Therefore $\shG_I$ is  universally $f$-generating by $\fpqc$  descent (Proposition \ref{PROP:permanence}.\ref{PROP:permanence:IT:local}).
\end{proof}

The following establishes descent of the completeness property along finite, flat, finitely presented surjections. It seems to be known before only for \'etale maps \cite{Tho87:MR893468}.

\begin{prop}\label{Genqcofp_Rel_FinFlat:PROP:descent}
Let $f \colon X \to Y$ be a finite, faithfully flat and finitely presented morphism, and let $g \colon Y \to Z$ be a quasi-compact and quasi-separated morphism of algebraic stacks.
If $\shG_I$ is a (universally) $g \circ f$-generating family of $\shO_X$-modules, then the family of $\shO_Y$-modules $\pf f \shG_I = (\pf f \shG_i)_{i \in I}$ is (universally) $g$-generating. 
\end{prop}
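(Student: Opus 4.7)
The plan is to exploit the second adjunction $(\pf f, f^!)$ available for any finite morphism. Since $f$ is finite, faithfully flat and finitely presented, $\pf f \shO_X$ is a locally free $\shO_Y$-module of finite positive rank, and $f^! \shN \coloneq \shHom_{\shO_Y}(\pf f \shO_X, \shN)$ is a well-defined quasicoherent $\shO_X$-module via the $\pf f \shO_X$-action on the first argument. The pivotal structural observation is that the counit
\[\varepsilon_\shN \colon \pf f f^! \shN = \shHom_{\shO_Y}(\pf f \shO_X, \shN) \to \shN, \quad \psi \mapsto \psi(1),\]
is surjective. Indeed, at each stalk $y \in Y$ the element $1 \in (\pf f \shO_X)_y$ is nonzero modulo the maximal ideal, so by Nakayama it extends to a basis of the free $\shO_{Y,y}$-module $(\pf f \shO_X)_y$; after choosing such a basis, evaluation at $1$ becomes the projection of $\shN_y^r$ onto a coordinate, hence a split surjection stalk-locally.

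Given $\shN \in \Qco(Y)$, I apply the $(g \circ f)$-generating hypothesis to the quasicoherent $\shO_X$-module $f^! \shN$ to obtain a family of $\shO_Z$-modules $\shL_I = (\shL_i)_{i \in I}$ and a surjection
\[\bigoplus_{i \in I} \shG_i \otimes_{\shO_X} \pb{(g \circ f)} \shL_i \twoheadrightarrow f^! \shN.\]
Since $f$ is affine, $\pf f$ is exact on $\Qco$; combined with the projection formula $\pf f(\shG_i \otimes \pb f \pb g \shL_i) \simeq \pf f \shG_i \otimes \pb g \shL_i$ this yields
\[\bigoplus_{i \in I} \pf f \shG_i \otimes_{\shO_Y} \pb g \shL_i \twoheadrightarrow \pf f f^! \shN,\]
and composing with $\varepsilon_\shN$ delivers the desired surjection onto $\shN$. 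The finite presentedness of each $\pf f \shG_i$ as an $\shO_Y$-module reduces affine-locally to the elementary fact that a finitely presented module over a finitely presented algebra is finitely presented over the base ring.

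For the universal version, the construction is compatible with arbitrary base change $Z' \to Z$: the pullback $f' \colon X' \to Y'$ is again finite, faithfully flat and finitely presented; $(\pf f \shG_i)|_{Y'} \simeq \pf{f'}(\shG_i|_{X'})$ by flat base change; and $f^!$ commutes with the base change because $\pf f \shO_X$ is finitely presented. Running the non-universal argument over $Y'$ with the family $\shG_I|_{X'}$, which is $(g' \circ f')$-generating by the universal hypothesis, gives that $(\pf f \shG_I)|_{Y'}$ is $g'$-generating, uniformly in $Z' \to Z$.

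The main obstacle is recognising that a direct push-forward approach fails: covering $\pb f \shN$ on $X$ and pushing forward only yields a surjection onto $\pf f \pb f \shN \simeq \shN \otimes_{\shO_Y} \pf f \shO_X$, of which $\shN$ is merely a subsheaf via the unit, not a quotient. Routing through $f^! \shN$ reverses the issue, since $\shN$ is a quotient of $\pf f f^! \shN$ via the counit; the crux then becomes the surjectivity of $\varepsilon_\shN$, which rests precisely on $\pf f \shO_X$ being finite locally free of positive rank so that $1$ extends locally to a basis.
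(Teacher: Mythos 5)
Your proof is correct and takes essentially the same route as the paper: both invoke the right adjoint $\us f$ of $\pf f$ for a finite, flat, finitely presented morphism and reduce everything to the surjectivity of the counit $\pf f \us f (\shN) \to \shN$, which you establish by a Nakayama/local-basis argument while the paper deduces it from $\shO_Y \to \pf f \shO_X$ being an $\fppf$-locally split monomorphism of $\shO_Y$-algebras. The only cosmetic difference is that you verify condition \ref{LEM:equivdefs:IT:surjection} of Lemma \ref{LEM:equivdefs} via explicit surjections and the projection formula, whereas the paper verifies the equivalent faithfulness condition \ref{LEM:equivdefs:IT:pf_faithful} at the level of functors.
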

\begin{proof}
It suffices to treat the non-universal case by applying an appropriate base change.
So let us assume that $\shG_I$ is a $g \circ f$-generating family of $\shO_X$-modules.
Now, we invoke Grothendieck duality for finite morphisms. Recall that $\pf f$ preserves finitely presented sheaves because $f$ is finite and locally free, and that $\pf f$ has a right adjoint $\us f$ defined by $\pf f \us f ( \cdot) = \shHom_{\shO_Y}(\pf f \shO_X, \cdot)$. Then the adjunction formula
$\pf f \shHom_{\shO_X}( \cdot, \us f (\cdot)) = \shHom_{\shO_Y}(\pf f (\cdot), \cdot)$ implies that for each $i \in I$ holds 
\(
 \pf g \circ \shHom_{\shO_Y}(\pf f \shG_i, \cdot ) \simeq \pf g \circ \pf f \circ \shHom_{\shO_X}(\shG_i, \cdot) \circ \us f
\)
as isomorphism of functors $\Qco(Y) \to \Qco(Z)$.
Using Proposition \ref{LEM:equivdefs}.\ref{LEM:equivdefs:IT:pf_faithful} one can see that $\pf f$ maps $g \circ f$-generating families to $g$-generating families if $\us f$ is faithful.
The latter is equivalent to the property that the counit $\pf f \us f (\shM) \to \shM$ is surjective for every quasi-coherent $\shO_Y$-module $\shM$. By applying $\shHom_{\shO_Y}( \cdot, \shM)$ to the canonical map $\varphi_f \colon \shO_Y \to \pf f \shO_X$, we see that this happens precisely if $\varphi_f$ is an $\fppf$ locally split monomorphism of quasi-coherent $\shO_Y$-modules. The latter is true by faithfully flatness of $f$ because $\varphi_f$ is a map of $\shO_Y$-algebras.
\end{proof}

We do not know a general descent method for non-finite flat affine coverings. The main obstacle is that the pushforward of a finitely presented quasi-coherent sheaf is no longer finitely presented. The following technical lemma is a reminiscence of this approach and will be helpful to construct generating families on low dimensional stacks.

\begin{lem}\label{LEM:non-finite_pushdown}
Let $f \colon Y \to X$ be an affine and faithfully flat morphism of algebraic stacks such that $Y$ is quasi-affine.
Then every quasi-coherent $\shO_X$-module $\shM$ is a quotient of a quasi-coherent $\shO_X$-submodule $\shN \subset \pf f \shO_Y^{(I)}$ for some set $I$.
\end{lem}
\begin{proof}
First, we may identify $\shM$ with a subsheaf of $\pf f \pb f \shM$ via the unit $\delta \colon \shM \to \pf f \pb f \shM$, which is injective since $f$ is faithfully flat. 
As $Y$ is quasi-affine, we can pick a surjection 
$\shO_Y^{(I)} \twoheadrightarrow \pb f\shM$.
Then the pushforward $\psi \colon \pf f \shO_Y^{(I)} \twoheadrightarrow \pf f \pb f\shM$ is surjective since $f$ is affine. It follows that the preimage $\shN \coloneq \psi^{-1}(\shM)$ has the desired properties. \end{proof}

\begin{cor}
Let $X$ be a reduced quasi-compact algebraic stack with affine diagonal. Then every quasi-coherent sheaf is a quotient of a torsionfree quasi-coherent sheaf.
\end{cor}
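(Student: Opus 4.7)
The plan is to combine Lemma \ref{LEM:non-finite_pushdown} with the observation that on a reduced stack, pushforwards along flat affine morphisms are torsionfree, and then use that the class of torsionfree sheaves is closed under subobjects and filtered colimits.

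First, I would produce a suitable affine flat cover. Since $X$ is quasicompact and algebraic, there exists a smooth surjection $f \colon Y \to X$ with $Y$ an affine scheme. The hypothesis that $\Delta_{X/\bbZ}$ is affine forces the representable morphism $f$ to be affine: for any test scheme $Z \to X$, the fiber product $Y \times_X Z = (Y \times Z) \times_{X \times X} X$ is affine over $Y \times Z$ because $\Delta_X$ is. Since $f$ is smooth surjective, it is also faithfully flat. Finally, as $X$ is reduced and $f$ is smooth, $Y$ is reduced. So $f \colon Y \to X$ satisfies the hypotheses of Lemma \ref{LEM:non-finite_pushdown}, and for a given quasicoherent $\shO_X$-module $\shM$ we obtain a surjection $\varinjlim_\alpha \shN_\alpha \twoheadrightarrow \shM$ with $\shN_\alpha \subset \pf f \shO_Y^{\oplus n_\alpha}$.

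Next I would verify that $\pf f \shO_Y$ is torsionfree on $X$, i.e.\ checking the condition after smooth pullback: for any affine smooth atlas $U = \Spec A \to X$, the reduced ring $A$ has the pullback $\pf{f_U}\shO_{Y_U}$ given by a faithfully flat $A$-algebra $B$. Any non-zerodivisor of $A$ remains a non-zerodivisor in $B$ by flatness (associated primes pull back correctly, or simply: since $A$ is reduced its non-zerodivisors are those avoiding minimal primes, and flatness preserves this), so $B$ is $A$-torsionfree. Hence $\pf f \shO_Y$, and consequently each $\pf f \shO_Y^{\oplus n_\alpha}$, is torsionfree on $X$.

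The class of torsionfree quasicoherent sheaves is stable under subobjects and under filtered colimits (direct limits are exact and preserve injectivity of multiplication by non-zerodivisors). Therefore each $\shN_\alpha$ is torsionfree, and so is the filtered colimit $\varinjlim_\alpha \shN_\alpha$. This colimit surjects onto $\shM$, proving the assertion. There is no real obstacle here beyond the verification that ``torsionfree'' behaves well with respect to smooth pullback; once one works on a smooth atlas and uses flatness to preserve non-zerodivisors, the argument is a direct application of Lemma \ref{LEM:non-finite_pushdown}.
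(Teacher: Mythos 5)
Your proof is correct and is exactly the argument the paper intends: the corollary is stated without proof immediately after Lemma \ref{LEM:non-finite_pushdown}, and the intended route is precisely yours --- use quasicompactness and the affine diagonal to get an affine faithfully flat atlas $f \colon Y \to X$ with $Y$ affine, apply the lemma, and observe that the subsheaves $\shN_\alpha \subset \pf f \shO_Y^{\oplus n_\alpha}$ and their filtered colimit are torsionfree. (Your only inessential detour is invoking reducedness of $A$ when checking that $B$ is $A$-torsionfree: flatness of $B$ over $A$ already makes multiplication by any non-zerodivisor injective, so reducedness serves only to make ``torsionfree'' the expected notion on $X$.)
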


\section{Pinching schemes }
\label{SEC:pinching}

Recall that every quasi-compact algebraic space $X$ is finitely parametrized by a \emph{scheme}, saying that it admits a finite and finitely presented surjection $f \colon Z \to X$ from a scheme $Z$ (see \cite[16.6]{LM00:MR1771927} for the noetherian case and \cite[Thm. B]{Ryd15:noetherianapprox} for the general case).  
In this section we show that if every fiber of $f$ is contained in an affine open subset, then $X$ is representable by a scheme.

\begin{defi}\label{DEF:AFscheme}
An algebraic space $X$ is an \newdef{$\AF$-scheme} (or satisfies the \newdef{Chevalley--Kleiman property}) if the following condition is satisfied:
\begin{enumerate}[label=(AF), ref=(C\arabic{*})]
\item Every finite set of points $x_1, \dots, x_n \in \pts{X}$ is contained in a Zariski open neighborhood that is representable by an affine scheme.
\end{enumerate}
\end{defi}

\begin{rem}\label{REM:AF} ---
\begin{enumerate}
\item \label{REM:AF:IT:sep}Every $\AF$-scheme is separated.
\item \label{REM:AF:IT:finite+AF=affine}Every $\AF$-scheme with finitely many points is affine.
\item \label{REM:AF:IT:normal+AF=quasi-projective} Every normal $\AF$-scheme of finite type over an algebraically closed field is quasi-projective \cite[Cor. 2]{Ben13}.
\item \label{REM:AF:IT:ascent}If $f \colon Y \to X$ is a strongly representable morphism of algebraic spaces such that $Y$ admits a relatively ample invertible sheaf and $X$ is an $\AF$-scheme, then $Y$ is an $\AF$-scheme. In particular, this holds if $f$ is affine or quasi-affine.
\end{enumerate}
\end{rem}

The main result of this section is to prove that the $\AF$ property descends along morphisms that are separated, surjective, universally closed and have finite topological fibers. It is a stronger variant of Theorem \ref{THM:AF_descends_simple}.

\begin{thm}\label{THM:AF_descends}
Let $f \colon Y \to X$ be a morphism of algebraic spaces that is separated, surjective, universally closed and has finite topological fibers, i.e.~the topological space $|f^{-1}(x)|$ is finite for every $x \in X$ (discrete or not). Then $X$ is an $\AF$-scheme if and only if $Y$ is an $\AF$-scheme.
\end{thm}

As D. Rydh pointed out, $f$ is integral by the following Lemma. In particular, $f$ has discrete fibers.

\begin{lem}\label{LEM:univclosed_sep_finittopfibers_then_integral}
Let  $f \colon Y \to X$ be a universally closed and separated morphism of algebraic spaces. If the topological fibers of $f$ are finite, then $f$ is integral.
\end{lem}
\begin{proof}
Note that $f$ is representable and quasi-compact because the fibers are quasi-compact and $f$ is closed.
By applying \cite[Thm. 8.3]{Ryd15:noetherianapprox} the morphism $f$ factors  as an integral and surjective morphism $Y \to Y_0$ followed by a proper morphism $Y_0 \to X$. Since $Y \to X$ has finite fibers, so has $Y_0 \to X$. It follows that $Y_0 \to X$ is finite (Zariski's Main Theorem). Hence $Y \to X$ is integral.
\end{proof}

A key step of the proof of Theorem \ref{THM:AF_descends} is the verification of the following global representability criterion.

\begin{prop}\label{PROP:finitely_param_algspace}
Let $Z \to X$ be an integral surjective morphism of algebraic spaces. If $Z$ is a quasi-compact and quasi-separated scheme that admits an ample invertible sheaf, then $X$ is representable by a quasi-compact and separated $\AF$-scheme. If $X$ is noetherian and normal, then $X$ admits an ample invertible sheaf.
\end{prop}

\begin{rem}
 The result is well known if $X$ is a noetherian normal scheme  using the norm map \cite[6.6.2]{egaII}, or if $f$ is flat, finite and finitely presented, or if $f$ is a quotient map $Z \to Z/G$ of a geometric quotient by a finite group. If $Z$ is affine, then $X$ is affine by Chevalley's Theoreom for affines (\cite[8.1]{Ryd15:noetherianapprox}, or \cite{Knu71:MR0302647} in the noetherian case).
\end{rem}

\begin{proof}[Proof of Proposition \ref{PROP:finitely_param_algspace}]
Let us call $X$ \emph{finitely parametrized} if there exists such a finite surjective morphism $p \colon Z \to X$ from a scheme with an ample line bundle. We frequently use that this property ascends along finite maps $X' \to X$.

Note that $X$ is quasi-compact since $Z$ is quasi-compact and $p$ is surjective. $X$ is separated because $Z$ is separated and $p \colon Z \to X$ universally closed.

\emph{Step 1. Reduction to the case that $p$ is finite and finitely presented:}
$Z$ is the filtered projective limit $\varprojlim_\lambda Z_\lambda$ of integral and finitely presented (hence finite) algebraic $X$-spaces $Z_\lambda$  with affine bonding maps $Z_\lambda \to Z_\mu$ because $X$ is pseudo-noetherian \cite[Thm. A]{Ryd15:noetherianapprox}. Then for sufficiently large $\lambda$, each $Z_\lambda$ is a quasi-compact and separated scheme \cite[Thm. C.(iii)]{Ryd15:noetherianapprox}. By taking even larger $\lambda$, every ample sheaf $\shL$ on $Z$ descends to a compatible family of ample sheaves $\shL_{\lambda}$ on $Z_{\lambda}$. This is stated in \cite[C.8]{TT90:MR1106918} in case that the base $X$ is an affine scheme, but the proof also applies in the general case since the property ``affine'' can be approximated over an arbitrary quasi-compact algebraic stack \cite[Thm. C.(i)]{Ryd15:noetherianapprox}.
So we may assume that $f$ is finite and finitely presented by replacing $p$ with $Z_\lambda \to X$.

\emph{Step 2. Noetherian and normal case:}  $X$ is a geometric quotient of a noetherian normal scheme $X'$ by a finite group $G$ \cite[16.6.2]{LM00:MR1771927}. It follows that $X'$ is finitely parametrized: Since $X' \to X$ is finite, the pullback $\pr_1 \colon Z' \coloneq Z \times_X X' \to Z$ is finite, so that $Z'$ admits an ample line bundle $\shL'$ by hypothesis on $Z$. Since $\pr_2 \colon Z' \to X'$ is finite and surjective, $\shL \coloneq \norm_{Z'/X'}(\shL')$ is an ample $\shO_{X'}$-module \cite[6.6.2]{egaII}, showing that $X'$ is an $\AF$-scheme \cite[4.5.4]{egaII}. Then it is well-known that the geometric quotient $X = X'/G$ is representable by an $\AF$-scheme and that  $\norm_{X'/X}(\shL)$ is an ample $\shO_X$-module.

\emph{Step 3. Final step:}
By approximating $X$ and $p$, we may assume that $X$ is of finite type over $\bbZ$ (the reduction step in the proof of Chevalley's Theorem \cite[8.1]{Ryd15:noetherianapprox} applies literally). In particular, $X$ is noetherian and Nagata.
If $X_\red$ is an $\AF$-scheme, then $X$ is an $\AF$-scheme as a consequence of Chevalley's Theorem. Therefore we may assume that $X$ is reduced since $X_\red$ is finitely parametrized.
The normalization $f \colon X' \to X$ is finite since $X$ is Nagata, hence $X'$ is normal, noetherian and finitely parametrized. 
By step 2 we know that $X'$ is representable by an $\AF$-scheme.
Let  $i \colon Y \subset X$ be the closed subspace defined by the ideal $\Ann \coker (\shO_X \to \pf f \shO_{X'}) $, set $i' \colon Y' = Y \times_X X' \subset  X'$ and $g \coloneq f|_{Y'} \colon Y' \to Y$. Then $X$ is the pushout of $i'$ and $g$ in the category of algebraic spaces because $f$ has schematically dense image (see Lemma \ref{ResProp_GenOX_Chevalley:LEM:algspaces_pinching} below).
Since $Y \subset X$ is a proper subspace that is finitely parametrized, by noetherian induction we may assume that $Y$, and hence $Y'$, is an $\AF$-scheme. Thus, the pushout $X_0 \coloneq X' \sqcup_{Y'} Y$ exists already in the category of ringed spaces and is an $\AF$-scheme since $X'$ and $Y$ are $\AF$-schemes \cite[5.4]{Fer03:MR2044495}. 

We claim that $X=X_0$ and since $X$ is the pushout in the category of algebraic spaces it is enough to prove that $X$ is a scheme. Let $f_0 \colon X' \to X_0$ be the quotient map, which is finite, surjective and schematically dominant. Then by the universal property it factors as $f_0=h \circ f$ for some map of algebraic spaces $h \colon X \to X_0$. In order to see that $X$ is a scheme, we may assume that $X_0$ is affine by taking a Zariski covering of affine open subschemes of $X_0$. Then $X'$ is affine using that $f_0$ is affine. Consequently, $X$ is affine by Chevalley's Theorem since $f$ is finite and surjective, proving the assertion.
\end{proof}

The following preparatory lemma is folklore but stated for lack of reference.
\begin{lem}\label{ResProp_GenOX_Chevalley:LEM:algspaces_pinching}
Given a morphism of algebraic spaces $f \colon X' \to X$, the closed immersion $i \colon Y \hookrightarrow X$  defined by the conductor ideal $\Ann_{\shO_X} \coker (\shO_X \to \pf f \shO_{X'})$ and the preimage $Y' \coloneq f^{-1}(Y)$  with closed immersion $i' \colon Y' \hookrightarrow X'$ and restriction $g=f|_{Y'} \colon Y' \to Y$ gives rise to a cartesian square:
 \begin{equation}\label{ResProp_GenOX_Chevalley:LEM:algspaces_pinching:EQ:1}
\xymatrix{
Y' \ar[r]^{i'} \ar[d]^g  & X' \ar[d]^{f} \\
Y \ar[r]^{i}  & X }
\end{equation}
If $f$ is finite and schematically dominant (i.e.~$\shO_X \to \pf f \shO_{X'}$ is injective), then the square is cocartesian in the category of algebraic spaces.
\end{lem}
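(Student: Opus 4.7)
The cartesian part of the square is automatic from the definition $Y' = f^{-1}(Y) = X' \times_X Y$. For the cocartesian part I would follow the standard conductor-pinching recipe of Ferrand, reducing the topological/algebraic-space statement to a ring-theoretic pullback identity and then invoking the pinching theorem to glue back.

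The first task is to recognize that the conductor $\shC \coloneq \Ann_{\shO_X} \coker(\shO_X \to \pf f \shO_{X'})$, by its very definition, is not only an ideal of $\shO_X$ but also an ideal of $\pf f \shO_{X'}$, and is in fact the largest ideal common to both. In particular $\shC \cdot \pf f \shO_{X'} = \shC$, so $Y'$ coincides with the closed subspace of $X'$ cut out by $\shC$ and $\pf g \shO_{Y'} = \pf f \shO_{X'}/\shC$. Next I would verify the sheaf-level pullback identity
\[
\shO_X \;\xrightarrow{\;\sim\;}\; \pf f \shO_{X'} \times_{\pf g \shO_{Y'}} \shO_Y
\]
by a direct diagram chase: given a section $(a, \bar b)$ on the right with $a \in \pf f \shO_{X'}$ and $\bar b \in \shO_X/\shC$ mapping to a common element of $\pf f \shO_{X'}/\shC$, lift $\bar b$ to some $b \in \shO_X$; by the schematic density assumption $\shO_X \hookrightarrow \pf f \shO_{X'}$ is injective, so $a - b$ is a well-defined element of $\pf f \shO_{X'}$ which lies in $\shC \subset \shO_X$, whence $a \in \shO_X$. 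The reverse inclusion is obvious.

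With the ring-theoretic pushout in hand, the cocartesian statement is precisely the content of Ferrand's pinching theorem \cite[Thm.~5.4]{Fer03:MR2044495}, which produces the pushout of a finite morphism along a closed immersion as the ringed space whose structure sheaf is exactly the fibre product above, and shows it is a scheme (or algebraic space) under our hypotheses. The main technical point, and the step I would expect to be the most delicate, is the passage from Ferrand's original scheme-theoretic formulation to the algebraic space setting: I would handle this by working étale-locally on $X$, where $X$ becomes an affine scheme and $X'$, being finite over it, is then also a scheme (so $Y$ and $Y'$ are schemes too), reducing to Ferrand's case directly; the resulting local pushouts then glue via étale descent because the pullback identity for $\shO_X$ is an étale-local statement, and uniqueness of the pushout identifies the global result with $X$.
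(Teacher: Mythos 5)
Your overall strategy---reduce \'etale-locally on $X$ to the affine case, invoke Ferrand there, and then globalize---is the same as the paper's, and your verification of the pullback identity $\shO_X \simeq \pf f \shO_{X'} \times_{\pf g \shO_{Y'}} \shO_Y$ is a correct account of why the affine case works. The gap is in the globalization step, precisely the one you flag as delicate: \emph{pushouts do not glue by \'etale descent}. A colimit is characterized by a universal property against arbitrary test objects, and knowing that each $U$ in an affine \'etale cover of $X$ is the pushout of the restricted conductor diagram does not formally imply that $X$ is the pushout of the global one; ``uniqueness of the pushout'' only identifies two objects both already known to be pushouts of the \emph{same} diagram, which is not the situation here. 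Likewise, the fact that the pullback identity for $\shO_X$ is \'etale-local only controls maps out of $X$ to affine (or ringed-space) targets, not to arbitrary algebraic spaces. The correct way to descend the universal property---and what the paper does---is to test directly against an arbitrary algebraic space $T$: given $h \colon X' \to T$ and $j \colon Y \to T$ with $h i' = j g$, restrict along an affine \'etale cover $u \colon U \to X$, obtain from the affine case a unique $t' \colon U \to T$ compatible with $h$ and $j$, check that $t' \circ \pr_1 = t' \circ \pr_2$ on $U \times_X U$ using the affine-case \emph{uniqueness}, and then use that $\Hom(\cdot, T)$ is a sheaf for the \'etale topology to descend $t'$ to $t \colon X \to T$; separatedness of this presheaf gives uniqueness of $t$, and the identities $tf = h$, $ti = j$ are checked locally. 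This argument is what your phrase ``glue via \'etale descent'' needs to be unpacked into, and without it the proof is incomplete.

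A second, smaller point: Ferrand's theorem exhibits the conductor square as a pushout in the category of ringed spaces and of schemes, whereas the lemma asserts it is cocartesian in the category of \emph{algebraic spaces}, already in the affine case. That is a strictly stronger statement (more test objects $T$), and it is the input the descent argument above actually requires; the paper does not deduce it from the scheme case but cites \cite[Proof of Thm. 2.2.2]{CLO10:arXiv:0910.5008v2} alongside \cite[\S 1.1]{Fer03:MR2044495} for it. Your proposal should either quote that stronger form or supply the extra argument.
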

\begin{proof} 
In case that $X$ and hence $X'$, $Y$, $Y'$ are affine schemes, the conductor square is cocartesian in the category of algebraic spaces  (\cite[\S 1.1]{Fer03:MR2044495} and \cite[Proof of Thm. 2.2.2]{CLO12}).
It follows that for every \'etale covering $u \colon U \to X$ with $U$ affine, one recovers $U$ as the pushout of $g_U \colon Y'_U \to Y_U$ and $i'_U \colon Y'_U \to X'_U$.

In order to see that the square is cocartesian,
let $h \colon X' \to T$, $j \colon Y \to T$ be given morphisms satisfying $hi'=jg$. We have to construct a unique map $t \colon X \to T$ with $tf=h$ and $ti=j$. 
Suppose there are two maps $t_1, t_2 \colon X \to T$ satisfying this condition. Then $t_1 u = t_2 u$ by uniqueness of the former case, hence $t_1=t_2$ since $\Hom(\cdot, T)$ is a separated presheaf. This shows uniqueness. Regarding the existence, observe that $X'_U \to T$ and $Y'_U \to T$ factor over a unique map $t' \colon U \to T$. It gives rise to two morphisms $t' \circ \pr_{\alpha} \colon U \times_X U \to U \to T$, $\alpha=1,2$, and both satisfy the compatibility condition after restricting \eqref{ResProp_GenOX_Chevalley:LEM:algspaces_pinching:EQ:1} along the \'etale covering $U \times_X U \to X$. So by uniqueness, we infer $t' \circ \pr_1 = t' \circ \pr_2$. Since $\Hom(\cdot, T)$ is a sheaf, there is a map $t \colon X \to T$ with $t u = t'$. The condition $tf=h$ (resp.~$ti=j$) is local over $X'$ (resp.~$Y$), hence follows by restricting \eqref{ResProp_GenOX_Chevalley:LEM:algspaces_pinching:EQ:1} along $u$.
\end{proof}

\begin{cor}\label{COR:intcovering_schematic_points}
Let $f \colon Y \to X$ be an integral surjective morphism of algebraic spaces. A finite set of points $P \subset \pts X$ is contained in an affine open subspace if and only if $f^{-1}(P) \subset |Y|$ is contained in an affine open subspace. 
\end{cor}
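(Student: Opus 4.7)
The plan is to treat the two implications separately, with the forward direction essentially free and the reverse direction reducing, via the saturation construction, to an application of Theorem \ref{THM:finitely_param_algspace}.

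For the forward direction, I would simply observe that an integral morphism is affine, so the preimage of any affine open subspace $U \subset X$ is an affine open subspace $f^{-1}(U) \subset Y$. Hence $P \subset U$ yields $f^{-1}(P) \subset f^{-1}(U)$ as required.

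For the reverse direction, suppose $f^{-1}(P) \subset V$ with $V \subset Y$ affine open. I would invoke the saturation defined in the paragraph preceding the statement: since $f$ is integral and therefore universally closed, the set $V^s = Y \setminus f^{-1}(f(Y \setminus V))$ coincides with $f^{-1}(U)$ for the open subspace $U \coloneq X \setminus f(Y \setminus V) \subset X$. The hypothesis $f^{-1}(P) \subset V$ translates into $P \subset U$, because for $p \in P$ the containment $f^{-1}(p) \subset V$ is equivalent to $p \notin f(Y \setminus V)$. Moreover, $f^{-1}(U) = V^s$ is a quasicompact open subscheme of the affine scheme $V$, hence a quasiaffine scheme carrying an ample invertible sheaf, and the restriction $f^{-1}(U) \to U$ remains integral and surjective.

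At this point Theorem \ref{THM:finitely_param_algspace} applies and guarantees that $U$ is representable by a quasicompact separated AF-scheme. For a finite set $P$ (the case relevant to the Kleiman-Chevalley property that the statement is meant to address), the defining AF-property of $U$ then directly produces an affine open neighborhood of $P$ inside $U$, which is automatically an affine open subspace of $X$ containing $P$. The principal obstacle lies in arranging the hypotheses of Theorem \ref{THM:finitely_param_algspace} — one needs both an open $U \subset X$ containing $P$ and an integral surjection onto $U$ from a scheme with an ample invertible sheaf — but the saturation trick, combined with the universal closedness of $f$, accomplishes exactly this in one step.
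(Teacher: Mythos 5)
Your argument is essentially identical to the paper's: the forward direction via affineness of integral morphisms, and the reverse direction via the saturation $V^s = f^{-1}(U)$ (using closedness of $f$) followed by an application of Theorem \ref{THM:finitely_param_algspace} to $V^s \to U$. The only difference is that you make explicit the final step (invoking the $\AF$-property of $U$ for a finite set $P$), which the paper leaves implicit after concluding $P \subset U$.
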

\begin{proof}
The condition is clearly necessary since $f$ is affine. Conversely, suppose that $V \subset Y$ is an affine Zariski open neighborhood of $f^{-1}(P)$. 
Since $f$ is closed, the set  $U = X - f(Y-V)$ is an open neighborhood of $P$ such that  $f^{-1}(P) \subset f^{-1}(U)\subset V$. Using that $P$ is quasi-compact we may replace $U$ with a quasi-compact open subset. Then $f^{-1}(U)$ is quasi-compact too and hence a quasi-affine open subscheme of $V$. Then $U$ is representable by an $\AF$-scheme by Proposition \ref{PROP:finitely_param_algspace}. Since $P \subset U$ and $P$ is finite we conclude that there exists an affine open subset $P \subset W \subset U$. 
\end{proof}

\begin{proof}[Proof of Theorem \ref{THM:AF_descends} and Theorem \ref{THM:AF_descends_simple}]
By Lemma \ref{LEM:univclosed_sep_finittopfibers_then_integral} the morphism $f$ is integral, hence affine. Thus, if $X$ is an $\AF$-scheme then so is $Y$.
Conversely, if $Y$ is an $\AF$-scheme then by Corollary \ref{COR:intcovering_schematic_points} $X$ must be an $\AF$-scheme, too.  
\end{proof}

\section{Global generation of sheaves and quasi-affiness}
\label{SEC:quasi-affiness}

In this section we show that for a quasi-compact and quasi-separated algebraic stack $X$ with affine stabilizer groups, the condition that every quasi-coherent sheaf is globally generated implies that $X$ is a quasi-affine scheme. This is well-known if $X$ is a quasi-compact and quasi-separated scheme (\cite[1.7.16]{egaIV_1}).

\begin{prop}\label{PROP:quaff=genOX+affinestabilizers}
A quasi-compact and quasi-separated morphism of algebraic stacks $f \colon X \to Y$ is quasi-affine if and only if the following conditions are satisfied:
\begin{enumerate}
 \item \label{PROP:quaff=genOX+affinestabilizers:IT:1} $\shO_X$ is universally $f$-generating.
 \item \label{PROP:quaff=genOX+affinestabilizers:IT:2} $f$ has affine relative stabilizer groups at geometric points, i.e.~the geometric fibers of the relative inertia \(I_f \to X\) are affine (equiv.~quasi-affine).
This holds for instance, if $f$ has quasi-affine diagonal (e.g. if $\Delta_f$ is quasi-finite and separated).
\end{enumerate}
\end{prop}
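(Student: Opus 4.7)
The plan is to dispatch the forward direction via the permanence properties and to reduce the converse to the representability result of Section \ref{SEC:pinching}. If $f$ is quasiaffine, condition (i) is exactly Proposition \refitem{PROP:permanence:IT:quaff}, while the diagonal of a quasiaffine morphism is a closed immersion, so all relative stabilizer groups are trivial, hence affine.

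For the converse, since quasiaffineness is fpqc-local on the base I would reduce to the case $Y = \Spec A$ affine. Then $\shO_X$ generates $\Qco(X)$, $X$ is qcqs, the stabilizer groups at geometric points are affine, and the task is to show $X$ is a quasiaffine scheme.

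The central step is to force the stabilizer groups to be trivial, so that $X$ is an algebraic space. I would fix a geometric point $x \colon \Spec \Omega \to X$ with stabilizer $G_x = \Spec \Omega \times_{x,X,x} \Spec \Omega$ and consider the residue gerbe $\mathcal{G}_x \hookrightarrow X$; base changed to $\Spec \Omega$ it becomes $BG_x$. Combining the base change property \refitem{PROP:permanence:IT:basechange} and the left-cancellation property \refitem{PROP:permanence:IT:lcp} of Proposition \ref{PROP:permanence}, the generating condition descends to the restriction of $\shO_X$ to $BG_x$, which under the identification of quasicoherent sheaves on $BG_x$ with $G_x$-representations is just the trivial representation. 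Every $G_x$-representation would then be a quotient of a direct sum of copies of the trivial representation; applied to the regular representation $\Omega[G_x]$, this forces $\Omega[G_x] = \Omega$, so $G_x$ must be the trivial group scheme, and $X$ is an algebraic space.

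For the final step I would invoke Rydh's approximation to reduce to $X$ noetherian, produce a finite, finitely presented surjection $Z \to X$ from a qcqs scheme $Z$ of finite type over $\bbZ$ (such a $Z$ carries an ample invertible sheaf by the argument already used in the proof of Theorem \ref{THM:finitely_param_algspace}), and apply Theorem \ref{THM:finitely_param_algspace} to conclude that $X$ is a quasicompact separated $\AF$-scheme. Since $\shO_X$ is generating on a separated scheme it is ample, so \cite[5.1.2]{egaII} identifies $X$ with a quasiaffine open of $\Spec \Gamma(X,\shO_X)$. The hardest part will be the stabilizer-triviality step: rigorously setting up the residue gerbe for a non-DM stack with possibly non-smooth affine stabilizers, and deducing $G_x = 1$ from the fact that the category of $G_x$-representations is generated by the trivial representation, which ultimately rests on the observation that the regular representation of any nontrivial affine algebraic group over a field is not generated by its invariants.
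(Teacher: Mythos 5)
Your overall architecture matches the paper's: necessity via Proposition \refitem{PROP:permanence:IT:quaff}, reduction to affine base, killing the stabilizers through the residual gerbe and the representation-theoretic observation that a group scheme all of whose representations are trivial is trivial, then finite parametrization by a scheme and Theorem \ref{THM:finitely_param_algspace}, ending with the explicit quasiaffineness argument for schemes. However, there is one step that fails as written. You justify applying Theorem \ref{THM:finitely_param_algspace} to the finite surjection $Z \to X$ by asserting that a qcqs scheme $Z$ of finite type over $\bbZ$ ``carries an ample invertible sheaf by the argument already used in the proof of Theorem \ref{THM:finitely_param_algspace}.'' That is not true: in the proof of that theorem the ample sheaf on $Z$ is a \emph{hypothesis}, and the approximation argument only descends an already-given ample sheaf to the finitely presented models $Z_\lambda$. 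Finite type over $\bbZ$ does not imply the existence of an ample invertible sheaf (the paper itself cites Schr\"oer's normal surfaces with no nontrivial invertible sheaves). The correct route, and the one the paper takes, is to first prove the scheme case of the proposition (your last paragraph), then observe that $\shO_Z = \pb p \shO_X$ is generating for $Z$ because $p$ is finite hence quasiaffine and $\shO_X$ is generating (composition, Proposition \refitem{PROP:permanence:IT:composition}); the scheme case applied to $Z$ then shows $Z$ is quasiaffine, so $\shO_Z$ itself is ample and Theorem \ref{THM:finitely_param_algspace} applies. So the scheme case must be established \emph{before} the algebraic-space case, not after; your ordering hides a circularity-avoiding step. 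The detour through noetherian approximation of $X$ is also unnecessary here.

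A smaller point on the stabilizer-triviality step: the permanence property that transfers the generating condition to the residual gerbe is \emph{composition} (Proposition \refitem{PROP:permanence:IT:composition}), not left-cancellation, and it requires knowing that the monomorphism $\shG_\xi \hookrightarrow X$ is quasiaffine (the paper cites Rydh for this), so that $\shO_{\shG_\xi}$ is relatively generating for the inclusion; left-cancellation runs in the wrong direction for this purpose. Your representation-theoretic endgame (triviality of the regular representation forces $\shO(G_x)=\Omega$ via the counit) is correct and is exactly the paper's.
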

\begin{proof}
The conditions are necessary by Proposition \ref{PROP:permanence}.\ref{PROP:permanence:IT:quaff} so let us verify the sufficiency. 
Both assumptions \ref{PROP:quaff=genOX+affinestabilizers:IT:1} and \ref{PROP:quaff=genOX+affinestabilizers:IT:2} are stable under base change, and the assertion is local over $Y$. Therefore, we may assume that $Y=\Spec(A)$ is affine and that $X$ is quasi-compact and quasi-separated, by replacing $Y$ with an appropriate smooth covering. 

First, we show that $X$ is representable.
For that, it suffices to show that $f$ has representable geometric fibers.
Therefore, we may assume that $Y$ is the spectrum of an algebraically closed field, by applying base change with a given point. Now, we have to show that for every point $x \colon \Spec k \to X$, the stabilizer group $G_x$ is trivial, which is an affine (algebraic) group scheme by assumption on $I_f$. Let $\xi \in |X|$ be the point induced by $x$. If $x' \colon \Spec k' \to X$
denotes another representative, then $G_{x'}$ and $G_x$ are isomorphic over some common field extension. Thus, if $G_{x'}$ is trivial, then so is $G_x$ by $\fpqc$-descent, showing that it suffices to find some representative of $\xi$ with trivial stabilizer group.

Using that $\xi \in |X|$ is algebraic \cite[Theorem B.2]{Ryd11:devissage}, there exists a representative  $x \colon \Spec k \to X$ that factors over the residual gerbe $\shG_\xi$ by an epimorphism $\overline x \colon \Spec k \twoheadrightarrow \shG_\xi$ followed by a quasi-affine monomorphism $\shG_\xi \hookrightarrow X$. The gerbe $\shG_\xi$ is an algebraic stack of finite type over the residue field $k(\xi)$, which is the sheafification of $\shG_\xi$.
It follows that there exists a finite field extension $k(\xi) \subset L$ such that $\shG_\xi \otimes_{k(\xi)} L \simeq \clsp G_{x'}$, where $G_{x'} \to \Spec L$ is the stabilizer group at the induced representative $x' \colon \Spec L \to \shG_\xi \hookrightarrow X$ of $\xi$. 
The upshot is that the composition $\clsp G_{x'} \to \shG_\xi \hookrightarrow X$ is a quasi-affine map, so that $\shO_{\clsp G_{x'}}$ is relatively generating.
Since $\shO_X$ is generating for $X$,  we conclude that $\shO_{B G_{x'}}$ is an absolute generator for $\clsp G_{x'}$ by Proposition \ref{PROP:permanence}\ref{PROP:permanence:IT:composition}. 
But then $G_{x'} \to \Spec L$ is the trivial algebraic $L$-group scheme because every linear representation is generated by the trivial representation. Therefore, $X$ is representable by an algebraic space. 

In order to see that $X$ is a quasi-affine scheme, take a finite, finitely presented and surjective morphism $p \colon Z \to X$ for some \emph{scheme} $Z$ \cite[Thm B]{Ryd15:noetherianapprox}. 
Since $p$ is quasi-affine, $\shO_Z = \pb p \shO_X$ is generating for $Z$, so that $Z$ is quasi-affine \cite[1.7.16]{egaIV_1}. But then
 $X$ must be a scheme by Theorem \ref{THM:AF_descends}. By the former argument, we conclude that $X$ is quasi-affine.
\end{proof}

\begin{rem}
In case that $f$ is representable and $Y$ has quasi-affine diagonal,  Proposition \ref{PROP:quaff=genOX+affinestabilizers} was proven in \cite[6.2]{AE12} by different methods.
If $X$ is noetherian and normal it can be deduced from the proof of Totaro's Theorem \cite[1.1]{Tot04:MR2108211}.
\end{rem}

\begin{cor}
A morphism of algebraic stacks $f \colon X \to Y$ has quasi-affine diagonal $\Delta_f$ if and only if $\shO_X$ is universally $\Delta_f$-generating.
\end{cor}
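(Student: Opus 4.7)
The plan is to prove the two implications separately. The ``if'' direction is immediate from Proposition \ref{PROP:permanence}.\ref{PROP:permanence:IT:quaff} applied with $g = \Delta_f$: every quasiaffine morphism has $\shO_X$ as a universal generator.

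For the converse, my strategy is to apply Proposition \ref{PROP:quaff=genOX+affinestabilizers} directly to the morphism $g = \Delta_f \colon X \to X \times_Y X$ itself. First I would note that $\Delta_f$ is quasicompact and quasiseparated -- this is in fact built into the hypothesis, since Definition \ref{DEF:rel_gen_family} presupposes qcqs for the universal generating notion to make sense. Condition \refitem{PROP:quaff=genOX+affinestabilizers:IT:1} of the proposition is then exactly our assumption that $\shO_X$ is universally $\Delta_f$-generating.

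The only substantive point is to verify condition \refitem{PROP:quaff=genOX+affinestabilizers:IT:2}, namely the affineness of the relative stabilizer groups of $\Delta_f$ at geometric points. But this is automatic: $\Delta_f$ is representable by algebraic spaces (a standard fact for diagonals of morphisms of algebraic stacks). Consequently, for every geometric point $x \colon \Spec k \to X$ the natural map $\Aut_X(x) \to \Aut_{X \times_Y X}(\Delta_f(x))$ is a monomorphism, so the fiber of the relative inertia $I_{\Delta_f} \to X$ over $x$ is trivial, hence \emph{a fortiori} affine. Proposition \ref{PROP:quaff=genOX+affinestabilizers} then concludes that $\Delta_f$ is quasiaffine. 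There is no real obstacle here; the corollary is essentially a one-line invocation of the preceding proposition, the only observation being that the stabilizer condition is automatically satisfied for any diagonal morphism.
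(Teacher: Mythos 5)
Your proof is correct and is exactly the argument the paper intends: the corollary is stated as an immediate consequence of Proposition \ref{PROP:quaff=genOX+affinestabilizers} applied to $\Delta_f$, with the ``if'' direction given by Proposition \ref{PROP:permanence}.\ref{PROP:permanence:IT:quaff} and the stabilizer condition holding automatically because $\Delta_f$ is representable by algebraic spaces, so its relative inertia is trivial.
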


\begin{cor}
A morphism of algebraic stacks $f \colon  X \to Y$  is quasi-affine if and only if $\shO_X$ is universally generating for $f $ and $\Delta_f$.
\end{cor}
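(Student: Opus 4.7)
The plan is to deduce this corollary directly by combining the two criteria already established: the preceding corollary, which characterizes quasiaffineness of $\Delta_f$ by universal $\Delta_f$-generation by $\shO_X$, and Proposition \ref{PROP:quaff=genOX+affinestabilizers}, which characterizes quasiaffineness of $f$ itself by universal $f$-generation together with an affine-stabilizer hypothesis. The key observation is that the explicit parenthetical remark in \refitem{PROP:quaff=genOX+affinestabilizers:IT:2} of that proposition allows us to replace the stabilizer condition by quasiaffineness of the diagonal, so the two criteria dovetail exactly.

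\textbf{Forward direction.} Assume $f$ is quasiaffine. Then $\shO_X$ is universally $f$-generating by Proposition \ref{PROP:permanence}.\ref{PROP:permanence:IT:quaff}. Moreover, any quasiaffine morphism is separated, so $\Delta_f$ is a closed (hence quasiaffine) immersion, and a second application of Proposition \ref{PROP:permanence}.\ref{PROP:permanence:IT:quaff} gives that $\shO_X$ is universally $\Delta_f$-generating.

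\textbf{Backward direction.} Assume $\shO_X$ is universally $f$-generating and universally $\Delta_f$-generating. The preceding corollary applied to $\Delta_f$ shows that $\Delta_f$ is quasiaffine. Consequently, condition \refitem{PROP:quaff=genOX+affinestabilizers:IT:2} of Proposition \ref{PROP:quaff=genOX+affinestabilizers} is satisfied (by the parenthetical "this holds for instance if $f$ has quasiaffine diagonal"), and condition \refitem{PROP:quaff=genOX+affinestabilizers:IT:1} is exactly the remaining hypothesis; that proposition then yields that $f$ is quasiaffine.

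\textbf{Expected obstacle.} There is no serious difficulty: the content of the corollary is purely a repackaging of the two previous results. The only thing to be careful about is to invoke the universal (as opposed to merely absolute) generation hypothesis in each step, so that base change used implicitly inside Proposition \ref{PROP:quaff=genOX+affinestabilizers} (for example, to reduce to the case $Y$ affine and to pass to geometric fibers) is legitimate. Since both assumptions are explicitly universal, this causes no trouble.
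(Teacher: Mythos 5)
Your proof is correct and matches the intended derivation: the paper states this corollary without proof as an immediate consequence of Proposition \ref{PROP:quaff=genOX+affinestabilizers} (via the parenthetical that quasiaffine diagonal implies affine relative stabilizers) together with the preceding corollary and Proposition \ref{PROP:permanence}.\ref{PROP:permanence:IT:quaff}, which is exactly how you argue. Both directions are handled cleanly, including the observation that quasiaffine morphisms are separated so that $\Delta_f$ is a closed immersion.
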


\section{The resolution property}
\label{SEC:resprop}

In this section we define the resolution property of a morphism in terms of locally free generating sheaves and recall the example classes where it is known to hold.
From now on we implicitly assume that every vector bundle has constant rank.

\begin{defi}\label{resproprel:DEF:resproprel}
An algebraic stack $X$ has the \emph{resolution property} if $X$ is quasi-compact and quasi-separated and if there exists a generating family of locally free $\shO_X$-modules.
We say that a morphism $ f\colon X \to Y$ of algebraic stacks has the \newdef{resolution property}, or that \newdef{$X$ has the resolution property over $Y$ (relative to $f$)}, if  $f$ is quasi-compact and quasi-separated and if there exists a universally $f$-generating family of locally free $\shO_X$-modules (see Definition \ref{DEF:rel_gen_family}).
\end{defi}

\begin{rem}\label{genshglo:REM:Totaros_resprop}
For a noetherian algebraic stack this definition is equivalent to Totaro's \cite{Tot04:MR2108211}, saying that $X$ has the resolution property if and only if every coherent sheaf is a quotient of a coherent locally free sheaf, by taking the family of all vector bundles (up to isomorphism) because $X$ has the completeness property (cf. Remark \ref{REM:completenessproperty}).
\end{rem}

Let us give the usual sorites for this class of morphisms.

\begin{prop}\label{resproprel:PROP:permanence} ---
\begin{enumerate}
  \item \label{resproprel:PROP:permanence_examples}  
Every affine, finite, or quasi-finite representable separated morphism, finite-type monomorphism, quasi-compact immersion, or more generally quasi-affine morphism has the resolution property.
  \item \label{resproprel:PROP:permanence_basechange} Let $Y' \to Y$ be a morphism.
    If a morphism $f \colon X \to Y$ has the resolution property, then so has the base change $f'\colon X' \to Y'$.
  \item \label{resproprel:PROP:local}
    Let $f \colon X \to Y$ be morphism and let $Y' \to Y$ be an $\fpqc$ morphism. If the base change $f' \colon X' \to Y'$ has the resolution property given by a family of locally free $\shO_{X'}$-modules $\shG_I' = (\shG_i')_{i \in I}$ endowed with a descent datum relative to $X' \to X$ (i.e.~isomorphisms $\sigma_{i} \colon \pb {\pr_1 }\shG_i' \xrightarrow{\simeq} \pb {\pr_2}\shG_i'$ for each $i \in I$, where $\pr_\alpha \colon X' \times_X X' \to X$, that satisfy the cocycle condition over $X' \times_X X' \times_X X'$), then $f$ has the resolution property and there is a universally $f $-generating family $\shG_I = (\shG_i)_{ i \in I}$ such that $\shG_i|_{X'} \simeq \shG_i'$ for each $i \in I$.
  \item \label{resproprel:PROP:permanence_products}
    If two morphisms $f \colon X_\alpha \to Y_\alpha$, $\alpha=1,2$, over an algebraic stack $S$, have the resolution property, then so has $f \times_S g \colon X_1 \times_S X_2 \to Y_1 \times_S Y_2$.  
\item \label{resproprel:PROP:permanence_composition}
    If $f \colon X \to Y$ and $g \colon Y \to Z$ have the resolution property, then so has $g \circ f$.
  \item \label{resproprel:PROP:permanence_leftcancellation}
    Suppose that $\Delta_g$ is quasi-affine. If $g \circ f$  has the resolution property, then so has $f$.
  \item \label{resproprel:PROP:permanence_finflatdescent} 
  Suppose that $f$ is finite, faithfully flat and finitely presented.
  If  $g \circ f$ has the resolution property, then so has $g$.
  \item \label{resproprel:PROP:permanence_reduction} If $f \colon X \to Y$ has the resolution property, then so has $f_\red \colon X_\red \to Y_\red$.
\end{enumerate}
\end{prop}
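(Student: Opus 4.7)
The plan is to deduce each item from the corresponding statement of Proposition \ref{PROP:permanence} (and, for item (vii), from Proposition \ref{Genqcofp_Rel_FinFlat:PROP:descent}), adding in each case the direct check that the construction used to build the new generating family preserves the class of locally free sheaves of finite rank. Item (i) is immediate from Proposition \ref{PROP:permanence}(ii), since the singleton $\shO_X$ is itself locally free. Item (ii) follows because restriction along a base change preserves locally freeness, and universal generation is stable under base change by definition. Items (iv) and (v) invoke the product and composition parts of Proposition \ref{PROP:permanence}; the new families there, namely $\shG_{I_1}^{(1)} \boxtimes_S \shG_{I_2}^{(2)}$ and $\shG_I \otimes \pb f \shE_J$, involve only external pullbacks and tensor products, hence stay inside the locally free range. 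Item (vi) applies the left-cancellation part, where the constructed family is $\lbrace\shO_X\rbrace \otimes \pb{\Gamma_f}(\pb p \shG_I)$, again built from pullbacks and $\shO_X$. Item (viii) uses the reduction part together with the fact that restriction to a closed substack preserves locally freeness.

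The descent item (iii) is handled as follows: given a descent datum $(\shG_I', \sigma_I)$ for $X' \to X$, $\fpqc$-descent for quasicoherent modules on algebraic stacks yields a quasicoherent family $\shG_I$ on $X$ with $\shG_i|_{X'} \simeq \shG_i'$; since local freeness of finite rank is $\fpqc$-local, each $\shG_i$ is automatically locally free. The universal $f$-generation of $\shG_I$ then follows from Proposition \ref{PROP:permanence}(iii).

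For item (vii), Proposition \ref{Genqcofp_Rel_FinFlat:PROP:descent} applied to the universally $g \circ f$-generating family $\shG_I$ immediately produces a universally $g$-generating family $\pf f \shG_I$ on $Y$. The only non-formal step in the whole proof, and thus the main obstacle, is the verification that $\pf f \shG_i$ is locally free of finite rank for every locally free $\shG_i$. Working \'etale-locally, we may assume $Y = \Spec A$ and $X = \Spec B$ with $B$ a finite locally free $A$-algebra; a locally free $\shG_i$ corresponds to a finite projective $B$-module $M$. Since $M$ is a direct summand of some $B^n$ as a $B$-module, it is also a direct summand of $B^n$ as an $A$-module. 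As $B$ is finite projective over $A$, so is $B^n$, hence so is its direct summand $M$. This identifies $\pf f \shG_i$ with a locally free $\shO_Y$-module of finite rank, which completes the argument.
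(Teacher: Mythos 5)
Your proposal is correct and follows essentially the same route as the paper, which likewise deduces items (i)--(vi) and (viii) from Proposition \ref{PROP:permanence} \emph{mutatis mutandis} (using that local freeness of finite type is preserved by pullback and tensor product and satisfies fpqc descent) and obtains item (vii) from Proposition \ref{Genqcofp_Rel_FinFlat:PROP:descent}. Your explicit check that $\pf f$ of a finite locally free module along a finite locally free morphism is again finite locally free is a detail the paper leaves implicit, and it is carried out correctly.
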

\begin{proof}
 The property ``locally free and finitely presented'' of quasi-coherent sheaves is stable under taking pullbacks or tensor products and satisfies descent with respect to $\fpqc$ covers. Thus Proposition \ref{PROP:permanence} holds \emph{mutatis mutandis} for generating and universally generating families of locally free finitely presented quasi-coherent sheaves. 
 From this one easily deduces properties \ref{resproprel:PROP:permanence_examples}-\ref{resproprel:PROP:permanence_leftcancellation} and \ref{resproprel:PROP:permanence_reduction}.
Finally, property \ref{resproprel:PROP:permanence_finflatdescent} is a consequence of Proposition \ref{Genqcofp_Rel_FinFlat:PROP:descent}.
\end{proof}

\begin{lem}[Finite fppf groupoids]\label{resprop:LEM:finitefppf_groupoids}
Let $R \rightrightarrows U$ be a finite, faithfully flat, finitely presented groupoid of algebraic $S$-spaces. If $U$ (and hence $R$) satisfies the resolution property over $S$, then so does the quotient stack $X=[R \rightrightarrows U]$. 
\end{lem}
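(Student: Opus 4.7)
The plan is to apply the finite fppf descent for the resolution property, established in Proposition \ref{resproprel:PROP:permanence}.\ref{resproprel:PROP:permanence_finflatdescent}, to the canonical atlas \(p \colon U \to X\) of the quotient stack \(X = [R \rightrightarrows U]\).

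First I would check that \(p\) is itself finite, faithfully flat, and of finite presentation. By construction of the quotient stack, \(U \times_X U\) is canonically identified with \(R\), with the two projections corresponding to the source and target maps \(s, t \colon R \rightrightarrows U\) of the groupoid. Since \(s\) and \(t\) are finite, faithfully flat, and finitely presented by hypothesis, and each of these three properties is local on the target for the \(\fpqc\) topology, \(p\) inherits all of them.

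Next I would observe that the composition \(f \circ p \colon U \to X \to S\), where \(f\) denotes the structure morphism of the quotient stack, coincides with the structure morphism of \(U\) as an \(S\)-space, and therefore has the resolution property by hypothesis. Applying Proposition \ref{resproprel:PROP:permanence}.\ref{resproprel:PROP:permanence_finflatdescent} to the pair \((p, f)\) then yields the resolution property for \(f\), as required.

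No step poses a real obstacle: the substantive content is already absorbed into Proposition \ref{Genqcofp_Rel_FinFlat:PROP:descent}, which handles descent of generating families along finite faithfully flat maps via Grothendieck duality, and which is invoked in the proof of Proposition \ref{resproprel:PROP:permanence}.\ref{resproprel:PROP:permanence_finflatdescent}. The present lemma is then a direct specialization of that descent theorem to the canonical presentation of a finite fppf groupoid quotient.
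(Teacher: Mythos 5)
Your proposal is correct and follows exactly the paper's own route: the paper likewise notes that the quotient map $q\colon U \to X$ is finite, faithfully flat and finitely presented and then invokes Proposition \ref{resproprel:PROP:permanence}.\ref{resproprel:PROP:permanence_finflatdescent}. Your additional verification that $q$ inherits these properties from the source and target maps via the identification $U \times_X U \simeq R$ and fppf-locality on the target is a correct filling-in of a step the paper leaves implicit.
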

\begin{proof}
The quotient map $q \colon U \to X$ is finite, finitely presented and faithfully flat. Thus, Proposition \ref{resproprel:PROP:permanence}.\ref{resproprel:PROP:permanence_finflatdescent} applies.
\end{proof}

\begin{cor}\label{resprop:EX:finite-groups}
Let $G \to S$ be a flat, finite and finitely presented (equiv.~finite, locally free) group algebraic space  over an algebraic space $S$ that satisfies the resolution property. Then the classifying stack $\clsp G$ has the resolution property.
\end{cor}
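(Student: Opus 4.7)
The plan is to realize the canonical atlas $p \colon S \to \clsp G$ as a finite, faithfully flat, finitely presented cover and to invoke the finite fppf descent already packaged into the permanence results. Since $p$ classifies the trivial $G$-torsor over $S$, the fiber product $S \times_{\clsp G} S$ is canonically $G \times_S S \simeq G$, so $p$ itself is a $G$-torsor and inherits finiteness, faithful flatness, finite presentation, and surjectivity from $G \to S$. In particular $\clsp G$ will be quasicompact and quasiseparated since $S$ is.

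Next I would translate the absolute hypothesis on $S$ into the relative language of Definition \ref{resproprel:DEF:resproprel}. The base $\Spec \bbZ$ has trivially quasiaffine diagonal, so Corollary \ref{COR:generating=univgenerating} promotes any absolute generating family of locally free sheaves on $S$ to a universally $(S \to \Spec \bbZ)$-generating family. Hence the structure morphism $S \to \Spec \bbZ$ satisfies the relative resolution property.

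Factoring $S \to \Spec \bbZ$ as $S \xrightarrow{p} \clsp G \to \Spec \bbZ$, I would then apply the finite fppf descent statement Proposition \ref{resproprel:PROP:permanence}.\ref{resproprel:PROP:permanence_finflatdescent} to $p$ to conclude that $\clsp G \to \Spec \bbZ$ has the relative resolution property. Since $\Spec \bbZ$ is affine, this amounts to a generating family of locally free sheaves on $\clsp G$, i.e.\ the absolute resolution property.

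There will be no real obstacle; the only subtlety is the bookkeeping between the absolute and relative versions of the resolution property, which is handled uniformly by the quasiaffine-diagonal reduction in Corollary \ref{COR:generating=univgenerating}. An equivalent route is to apply Lemma \ref{resprop:LEM:finitefppf_groupoids} directly to the groupoid $G \rightrightarrows S$ arising from the trivial $G$-action, whose quotient stack is $\clsp G$, and then pass from the resolution property over $S$ to the absolute one via the composition property Proposition \ref{resproprel:PROP:permanence}.\ref{resproprel:PROP:permanence_composition}.
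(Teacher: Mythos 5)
Your proposal is correct and follows essentially the same route as the paper, whose entire proof is the observation that the trivial $G$-torsor $S \to \clsp G$ is finite, finitely presented and faithfully flat, so that Proposition \ref{resproprel:PROP:permanence}.\ref{resproprel:PROP:permanence_finflatdescent} applies. Your additional bookkeeping (identifying $S \times_{\clsp G} S \simeq G$ and promoting the absolute generating family on $S$ to a universally generating one via Corollary \ref{COR:generating=univgenerating}) just makes explicit what the paper leaves implicit.
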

\begin{proof}
The trivial $G$-torsor $S \to \clsp G$ is finite, finitely presented and faithfully flat.
\end{proof}

\begin{rem}
This result is well-known if $G \to S$ is \'{e}tale \cite[2.14]{Tho87:MR893468}. 
\end{rem}

\begin{cor}\label{resprop:COR:CM_covers}
Let $X$ be a regular algebraic stack that admits a finite, finitely presented surjection $f \colon Y \to X$ such that $Y$ is Cohen-Macaulay and satisfies the resolution property. Then $X$ has the resolution property.
\end{cor}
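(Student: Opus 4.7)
My plan is to promote $f$ to a finite, faithfully flat, finitely presented morphism, so that the finite flat descent of the resolution property provided by Proposition \ref{resproprel:PROP:permanence}.\ref{resproprel:PROP:permanence_finflatdescent} applies. The crucial input is Hironaka's \emph{miracle flatness} criterion: a finite local homomorphism between Noetherian local rings with regular source and Cohen--Macaulay target is automatically flat once source and target have the same Krull dimension (which is the only nontrivial hypothesis, since the fiber is zero-dimensional as $f$ is finite).

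To apply this at the stack level, I would pick a smooth presentation $U \to X$ by a scheme $U$. Smoothness preserves both regularity and Cohen--Macaulayness, so $U$ is regular and $Y_U \coloneq Y \times_X U$ is Cohen--Macaulay. At every $y \in Y_U$ with image $u \in U$, the required equality $\dim \shO_{Y_U, y} = \dim \shO_{U, u}$ holds because regular local rings and Cohen--Macaulay local rings are locally equidimensional and $f_U \colon Y_U \to U$ is finite and surjective. Miracle flatness then yields flatness of $f_U$, and by smooth descent $f$ itself is flat; combined with surjectivity and finite presentation, $f$ becomes finite, faithfully flat and finitely presented.

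To finish, I would translate between the absolute and relative resolution property via the structure morphism $\pi \colon X \to \Spec \bbZ$. Since $\Spec \bbZ$ has affine (and in particular quasiaffine) diagonal, Corollary \ref{COR:generating=univgenerating} identifies the absolute resolution property of $Y$ with the relative resolution property of $\pi \circ f \colon Y \to \Spec \bbZ$. Applying Proposition \ref{resproprel:PROP:permanence}.\ref{resproprel:PROP:permanence_finflatdescent} to the factorization $Y \xrightarrow{f} X \xrightarrow{\pi} \Spec \bbZ$ then descends the resolution property from $\pi \circ f$ to $\pi$, which is precisely the absolute resolution property of $X$. The main obstacle is verifying the dimension-equality hypothesis in miracle flatness pointwise; this is the only non-formal step and reduces to a routine analysis of the minimal primes of the Cohen--Macaulay local rings $\shO_{Y_U, y}$ sitting over the equidimensional base $\shO_{U, u}$.
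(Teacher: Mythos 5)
Your proof follows the paper's argument exactly: the paper's entire proof is the one-line observation that regularity of $X$ and Cohen--Macaulayness of $Y$ force the finite surjection $f$ to be flat (miracle flatness), after which finite flat descent of the resolution property, i.e.\ Proposition \ref{resproprel:PROP:permanence}.\ref{resproprel:PROP:permanence_finflatdescent}, applies. You merely make explicit the smooth-local reduction and the pointwise dimension equality that the paper leaves implicit.
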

\begin{proof}
The regularity properties of $Y$ and $X$ imply that $f$ is flat.
\end{proof}

\begin{lem}[Stacks with regular noetherian covers of dimension $\leq 1$]\label{COR:stacks_with_dim_0or1_have_resprop}
Let $f \colon Y \to X$ be an affine faithfully flat morphism of algebraic stacks. Suppose that $Y$ is a noetherian regular scheme of dimension $\leq 1$. Then $X$ has the resolution property.
\end{lem}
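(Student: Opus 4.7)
The plan is to combine the resolution property already available on $Y$ with a flatness argument showing that any finitely generated quasicoherent $\shO_X$-submodule of a direct sum of pushforwards $\pf f \shV$ of vector bundles from $Y$ is automatically a vector bundle on $X$. This works because $f$ is affine and flat and the local rings of $Y$ are regular of dimension at most one, so coherent torsion-free sheaves on $Y$ are locally free of finite rank.

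Since $Y$ is noetherian regular of dimension $\leq 1$, every coherent sheaf on $Y$ is a quotient of a vector bundle, so $Y$ itself has the resolution property. As $f$ is affine and faithfully flat with $Y$ noetherian, the stack $X$ is quasicompact and quasiseparated, and so by Remark \ref{REM:completenessproperty} every quasicoherent $\shO_X$-module is the filtered union of its finitely generated quasicoherent subsheaves. Given a quasicoherent sheaf $\shM$ on $X$, I would choose a surjection $\bigoplus_i \shV_i \twoheadrightarrow \pb f \shM$ by vector bundles $\shV_i$ on $Y$. Since $f$ is affine, $\pf f$ is exact and commutes with direct sums, yielding $\bigoplus_i \pf f \shV_i \twoheadrightarrow \pf f \pb f \shM$. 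Faithful flatness gives the injection $\shM \hookrightarrow \pf f \pb f \shM$, and taking the preimage produces a quasicoherent submodule $\shN \subset \bigoplus_i \pf f \shV_i$ with $\shN \twoheadrightarrow \shM$. Writing $\shN$ as the filtered union of its finitely generated quasicoherent subsheaves $\shN^\beta$, the direct sum $\bigoplus_\beta \shN^\beta$ still surjects onto $\shM$, so it suffices to show each $\shN^\beta$ is a vector bundle on $X$.

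This last claim is the key point. Pulling back along the flat map $f$, the sheaf $\pb f \shN^\beta$ is a finitely generated, hence coherent, subsheaf of $\bigoplus_i \pb f \pf f \shV_i$, and checking locally on a finite affine cover of the noetherian scheme $Y$ one sees that coherence forces containment in a finite subsum indexed by some $F \subset I$. By flat base change, $\pb f \pf f \shV_i \simeq (\pr_1)_* (\pr_2)^* \shV_i$, where $\pr_1 \colon Y \times_X Y \to Y$ is an affine flat base change of $f$, so each summand is flat over $\shO_Y$ and hence so is $\bigoplus_{i \in F} \pb f \pf f \shV_i$. A coherent subsheaf of a flat sheaf on a regular scheme (whose local rings are integral domains, over which flat implies torsion-free) is torsion-free, and a coherent torsion-free sheaf on a noetherian regular scheme of dimension $\leq 1$ is locally free of finite rank. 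Therefore $\pb f \shN^\beta$ is a vector bundle on $Y$, and by $\fpqc$-descent of flatness and of finite presentation along the faithfully flat map $f$, $\shN^\beta$ itself is a vector bundle on $X$. The main obstacle is this torsion-freeness step, where the hypothesis $\dim Y \leq 1$ is essential: in higher dimensions coherent torsion-free sheaves need not be locally free, so this method does not extend directly.
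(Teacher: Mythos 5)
Your proof is correct and follows essentially the same route as the paper: both realize $\shM$ as a quotient of finitely generated quasicoherent subsheaves of pushforwards along the affine flat map $f$ of flat sheaves on $Y$, and both conclude with the same key step, namely that a coherent subsheaf of a flat sheaf on a regular noetherian scheme of dimension $\leq 1$ is torsion-free, hence locally free, so that flat descent makes the subsheaf a vector bundle on $X$. The only cosmetic difference is that the paper first reduces to $Y$ affine (using that open immersions into a one-dimensional scheme are affine) and then works with subsheaves of $\pf f \shO_Y^{\oplus n}$ via Lemma \ref{LEM:non-finite_pushdown}, whereas you invoke the resolution property of $Y$ with arbitrary vector bundles $\shV_i$.
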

\begin{proof}
Since $Y$ is quasi-compact and has affine diagonal (using that $\dim Y \leq 1$), we may replace $Y$ with an affine $Zariski$ covering and hence assume that $Y$ is affine. Then by Lemma \ref{LEM:non-finite_pushdown} and \cite[15.4]{LM00:MR1771927}, it suffices to resolve coherent subsheaves $\shM \subset \pf f \shO_Y^{\oplus n}$, $n \in \bbN$. But these are already locally free by flat descent because $\pb f \shM \subset \pb f \pf f \shO_Y^{\oplus n}$ is a finitely generated subsheaf of a torsion-sheaf free, and hence locally free since $\dim Y \leq 1$ and $Y$ is regular.
\end{proof}

\begin{ex}[Schemes]
Given a noetherian scheme $X$, the resolution property is known to hold in the following cases:
\begin{enumerate}
\item $X$ is divisorial. That is, every point $x \in X$ admits an affine open neighborhood that is the non-vanishing locus of a global section $s \in \Gamma(X, \shL)$ for some invertible sheaf $\shL$ (\cite{Bor63:MR0153683}, \cite{Bor67:MR0219545}). This is true, if $X$ is quasi-affine, or quasi-projective over a noetherian ring \cite[5.3.2]{egaII} (including all algebraic curves and all separated algebraic surfaces with finitely many isolated singularities that are contained in an affine open \cite[Cor. 4, p.328]{Kle66:MR0206009}). This also holds if $X$ is normal and $\bbQ$-factorial with affine diagonal (\cite[1.3]{BS03:MR1970862}, and the case of separated, regular noetherian schemes is due to Kleiman and independently Illusie \cite[II.2.2.7]{sga6})).
\item $X$ is separated and of finite type over a Dedekind ring and $\dim(X) \leq 2$ (\cite[5.2]{Gro12:respropsurfaces}, and for normal separated algebraic surfaces \cite[2.1]{SV04:MR2041778}). In dimension $\geq 2$ there exist normal, separated algebraic schemes that have no non-trivial invertible sheaves, and hence are not divisorial (see \cite{Schro99:MR1726231} for algebraic surfaces).
\end{enumerate}
\end{ex}

\begin{ex}[Classifying stacks of algebraic group schemes]\label{genshglo:EX:Thomasons_groupstacks_results} Given an affine, flat and finitely presented group scheme $\pi \colon G \to S$ over a noetherian and separated scheme, Thomason \cite{Tho87:MR893468} verified the absolute resolution property for $\clsp G$ in the following cases:
\begin{enumerate}
\item \label{genshglo:EX:Thomasons_groupstacks_results:IT:1} $S$ is regular with $\dim(S)\leq 1$.
\item \label{genshglo:EX:Thomasons_groupstacks_results:IT:2} $S$ is a regular with $\dim(S)=2$ and $\pf \pi \shO_G$ is a locally projective $\shO_S$-module; for instance, if $G \to S$ is smooth with connected fibers.
\item \label{genshglo:EX:Thomasons_groupstacks_results:IT:3} $S$ satisfies the resolution property, $G \to S$ is reductive and either 
$G$ is semisimple, or $S$ is normal, or the radical and coradical of $G$ are isotrivial (i.e.~diagonalizable on a finite \'etale cover of $S$).
\end{enumerate}
\end{ex}

\begin{rem}
By Totaro's Theorem and its generalization to arbitrary algebraic stacks (Theorem  \ref{THM:resprop=basic} below) we know that a quasi-compact and quasi-separated algebraic stack with affine stabilizer groups that satisfies the resolution property must have affine diagonal.
So every algebraic stack with quasi-affine and non-affine diagonal does not have the resolution property. 
As an example, glue two copies of $\bbA^2_k$ at the complement of the origin to get a scheme with quasi-affine and non-affine diagonal. Similarly, take the quasi-affine group scheme $G$ obtained from $\bbZ/2\bbZ \to \bbA_k^2$ by removing the origin in the non-identity component, then the classifying stack $\clsp G$ has quasi-affine but not affine diagonal.

There is an example of an algebraic stack with affine diagonal that does not have the resolution property. It is the $\Gm$-gerbe over a complex algebraic surface $Y$  
corresponding to a non-torsion element of the cohomological Brauer group $\Coho^2_{\et}(Y, \Gm)$.

We do not know if every algebraic stack with quasi-finite and affine diagonal has the resolution property, even in case of normal, separated algebraic schemes of an algebraically closed field (like toric threefolds, see \cite{Pay09:MR2448277}). 

\'{E}tale locally, every algebraic stack with quasi-finite and locally separated diagonal has the resolution property \cite[Cor. 2.7]{Ryd15:noetherianapprox}.
\end{rem}

\begin{rem}
If an algebraic stack $Y$ is fibered over an algebraic stack $X$ by means of a morphism $f \colon Y \to X$, then the question whether the resolution property holds or not, can be broken down to the relative resolution property of $f$ and the resolution property of the base $X$.
For example, from this point of view one can tackle the \emph{equivariant resolution property} of an algebraic space $Y$, acted on by an affine, flat and finitely presented group scheme $G$. It says that every quasi-coherent $\shO_Y$-$G$-comodule is a quotient of a direct sum of locally free and finitely presented $\shO_Y$-$G$-comodules. Now, quasi-coherent $\shO_Y$-$G$-comodules correspond to quasi-coherent sheaves on the quotient stack $X \coloneq [Y/G]$.
 On the one hand, the affine and faithfully flat quotient map $Y \to X$ is a $G$-torsor, and we get a $G$-fibration of $Y$ over the base $X$. On the other hand, the classifying morphism $X \to \clsp G$ imposes on $X$ a $Y$-fibration over $\clsp G$.
\end{rem}

\begin{prop}\label{PROP:resprop_equivar}
Let $S$ be an algebraic space and let $G \to S$ be an affine, flat and finitely presented algebraic group space that acts on an algebraic $S$-space $Y$.  Then the following conditions are equivalent:
\begin{enumerate}
 
 \item \label{PROP:resprop_equivar2} The classifying map $X=[Y/G] \to \clsp G$ has the resolution property,
 \item \label{PROP:resprop_equivar3} $Y$ has a family of $G$-linearized locally free $\shO_Y$-modules of finite type that is universally generating for $Y$ over $S$.
\end{enumerate}

Moreover, if $\clsp G \to S$ has the resolution property, then the conditions are equivalent to:
\begin{enumerate}
\item \label{PROP:resprop_equivar1} $[Y/G]$ has the resolution property over $S$,
\end{enumerate}
\end{prop}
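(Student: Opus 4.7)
The strategy is to exploit the fact that the universal $G$-torsor $\pi \colon S \to \clsp G$ is an $\fpqc$ cover whose base change of the classifying morphism $f \colon [X/G] \to \clsp G$ yields exactly the structure map $X \to S$. Combining this with the canonical equivalence $\Qco([X/G]) \simeq \{G\text{-equivariant quasicoherent sheaves on }X\}$, under which locally free sheaves of finite type correspond to $G$-linearized locally free sheaves of finite type, the equivalence of (i) and (ii) reduces to a direct application of the $\fpqc$-descent and base-change properties established for (universally) generating families in Proposition \ref{PROP:permanence}.

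Concretely, for (i) $\Rightarrow$ (ii), I would pull a universally $f$-generating family $\shG_I$ on $[X/G]$ back along $\pi$; the resulting family $\shG_I|_X$ is a $G$-linearized family on $X$, and its universal generation for $X \to S$ follows either directly from the definition of ``universally generating'' (since any $S' \to S$ factors as $S' \to S \to \clsp G$) or from Proposition \ref{PROP:permanence}.\ref{PROP:permanence:IT:basechange}. Conversely, for (ii) $\Rightarrow$ (i), the $\fpqc$-local property of generating families (Proposition \ref{PROP:permanence}.\ref{PROP:permanence:IT:local}) applied to the single-element cover $\pi \colon S \to \clsp G$ yields the desired conclusion, once the $G$-linearized family on $X$ is transported to $[X/G]$ via the above equivalence.

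Under the additional assumption that $\clsp G \to S$ has the resolution property, the remaining equivalence with (iii) is purely formal: (i) $\Rightarrow$ (iii) is the composition property Proposition \ref{resproprel:PROP:permanence}.\ref{resproprel:PROP:permanence_composition} applied to $[X/G] \to \clsp G \to S$, while (iii) $\Rightarrow$ (i) is the left-cancellation property Proposition \ref{resproprel:PROP:permanence}.\ref{resproprel:PROP:permanence_leftcancellation} applied to the same factorization. Its hypothesis is fulfilled because $G \to S$ is affine, so $\Delta_{\clsp G / S}$ is affine, hence quasiaffine.

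The only real content is the equivalence (i) $\Leftrightarrow$ (ii), which boils down to recognising that the projection $X \to S$ is the $G$-torsor pullback of $[X/G] \to \clsp G$ along $\pi$; everything else is a formal application of the permanence properties already developed.
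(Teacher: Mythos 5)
Your proposal is correct and follows essentially the same route as the paper: the key point in both is the $2$-cartesian square exhibiting $X \to S$ as the $\fpqc$ base change of $[X/G] \to \clsp G$ along the trivial torsor $S \to \clsp G$, with the equivalence of (i) and (ii) then following from descent and base change of generating families of locally free sheaves. Your explicit treatment of the ``Moreover'' part via composition and left-cancellation (using that $\Delta_{\clsp G/S}$ is affine since $G \to S$ is) is exactly the intended argument, which the paper leaves implicit.
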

\begin{proof}
By definition of the classifying stack $\clsp G$ there exists a $2$-cartesian square of $S$-stacks, where the vertical arrows are fppf-coverings:
\begin{equation}\label{clsp_fiber_square}
 \xymatrix{ Y \ar[r]\ar[d] &S \ar[d] \\
            [Y/G] \ar[r]  & \clsp G}
\end{equation}
Hence, a generating family of quasi-coherent sheaves for $[Y/G] \to \clsp G$ restricts to a quasi-coherent family on $Y \to S$ with descent datum, which is equivalent to giving a $G$-linearization. Conversely, every $G$-linearized family of quasi-coherent sheaves for $Y \to S$ descends to a family of quasi-coherent sheaves on $[Y/G]$. During this restriction and descent process the property of being a relative generating family of finitely presented locally free sheaves is preserved.
\end{proof}

\begin{rem}
An algebraic stack $X$ is a \emph{global quotient stack} if it is equivalent to an algebraic stack $[Y/\GL_n]$, where $Y$ is an algebraic space acted on by $\GL_n$ for some integer $n\geq 0$. It seems to be unknown whether the resolution property descends along the quotient map $q \colon Y \to [Y/\GL_n]$, in general. Though, if $Y$ is normal scheme that  admits an ample family of invertible sheaves $\lbrace \shL_i \rbrace_{i=1}^n$, then the powers $\lbrace \shL_i^{\otimes m} \rbrace$ admit a $\GL_n$-linearization for $m$ sufficiently large by Sumihiros Theorem \cite[Thm. 1.6]{Sum75:MR0387294}, and hence descend to invertible sheaves, whose duals form a generating family for $[Y/G] \to \clsp G$. Consequently, a large class of global quotient stacks $[Y/G]$ satisfy the resolution property.
\end{rem}

The following Lemma is a courtesy of D. Rydh and follows essentially \cite[4.1-4.2]{Tot04:MR2108211}.
\begin{lem}\label{LEM:resprop_closed_affines}
Let $X$ be a quasi-compact and quasi-separated algebraic stack. Assume that $X$ has the resolution property and affine stabilizers at closed points.
Then there is a vector bundle $\shE$ on $X$ such that the total space $Z$ of the corresponding $\GL_n$-torsor has affine stabilizers. In particular, $X$ has affine stabilizers at \emph{all} points.
\end{lem}
\begin{proof}. A vector bundle $\shE$ gives a \emph{twisted} representation of the inertia stack $I_X$ of $X$ (a morphism $I_X$ to a twisted form of $\GL_n$ over $X$) with kernel a (closed) subgroup $H \subseteq I_X$ such that the pull-back of $H$ to $Z$ equals the inertia stack of $Z$. For example, $H=0$ if $Z$ is an algebraic space.

Pick a closed point $x$. Pick a vector bundle $\shF$ over the residual gerbe $\shG_x$ such that the stabilizer action is faithful (possible since the stabilizer is affine). Consider the inclusion morphism $i \colon \shG_x \to X$ and note that $\pf i \shF$ is quasi-coherent.
Since $X$ has the resolution property, there is a vector bundle $\shE$ and a morphism $\shE \to \pf i {\shF}$ such that $\pb i{\shE} \to \pb i {\pf i {\shF}} \to \shF$ is surjective. This means that the representation of $\shG_x$ on $\shE$ is faithful, that is $H_x \coloneq \pb i H = 0$. By upper semi-continuity, $H$ is quasi-finite in an open neighborhood of $x$.

Repeating this for all closed points $x$ gives a finite number of vector bundles such that $\shE_1 \oplus ...\oplus \shE_n$ has a corresponding $\GL_n$-torsor $Z$ with quasi-finite inertia. In particular, $Z$ has affine stabilizers.

Thus, the stabilizer groups of $X$ are extensions of subgroups of $\GL_n$ with stabilizer groups of $Z$, hence affine.
\end{proof}

\section{Tensor generators and Totaro's Theorem}
\label{SEC:tensorgen}

In this section, we define the property of a vector bundle to be a tensor generator, and show that it is equivalent to the property that its associated frame bundle has quasi-affine total space, when the stabilizer groups are affine. Moreover, we prove the generalization of Totaro's Theorem \cite{Tot04:MR2108211} to the general relative case and give the proof of Theorem \ref{THM:thmA} at the end of the section.

\begin{defi}
Let $f \colon X \to Y$ be a morphism of algebraic stacks, and let $\shV$ on $X$ be a vector bundle of constant rank $n$ with associated frame bundle $p \colon \shIsom_X(\shO_X^n ,\shV) = F(\shV) \to X$. We shall say that $\shV$ is a \newdef{tensor generator for $X$ over $Y$} (or just \emph{$f$-tensor generating}), if the family of all $p$-locally split subsheaves $\shG \subset P(\shV, \shV^{\vee})$ is a universally $f$-generating family of quasi-coherent sheaves, where $P$ runs over all polynomials $P \in \bbN[t,s]$. Here, by $p$-locally split, we mean that the inclusion $\shG \subset P(\shV, \shV^{\vee})$ admits a left-inverse, when restricting to the total space $F(\shV)$.
Moreover, we say that $\shV$ is a \newdef{strong tensor generator for $X$ over $Y$} if the family $P(\shV, \shV^{\vee})$ of all polynomial expressions is a universally generating family.
\end{defi}

\begin{rem}
Suppose that $Y=\Spec k$ for a field $k$, and that $X=\clsp G$ where $G$ denotes an algebraic group scheme. Then the definition above coincides with the one in Tannaka theory \cite[6.16]{Del90:MR1106898} when we identify quasi-coherent $\shO_{\clsp G}$-modules with $k$-linear $G$-representations. 
\end{rem}

\begin{rem}\label{REM:tensor_generator_permanence}
The property ``tensor generator'' (resp. ``strong tensor generator'') is stable under base change. Also, the property ``strong tensor generator'' is local on the base. Moreover, it is stable on the source under pullback with quasi-affine morphisms $X' \to X$. 

As a consequence of  the following Theorem \ref{THM:tensor_generator}, the property ``tensor generator'' is local on the base if the morphism has relatively affine stabilizer groups because the property ``quasi-affine'' of morphisms is local on the base.
\end{rem}

\begin{thm}\label{THM:tensor_generator}
 Let $f \colon X \to Y$ be a quasi-compact morphism of algebraic stacks. Given a vector bundle $\shV$ with associated frame bundle $p \colon F(\shV) \to X$, the following properties are equivalent:
\begin{enumerate}

 \item \label{THM:tensor_generator1} 
\begin{enumerate}
\item $\shV$ is a tensor generator for $f \colon X\to Y$.
\item The relative inertia stack $I_f \to X$ has affine fibers.
\end{enumerate} 
  \item \label{THM:tensor_generator2} $f\circ p \colon F(\shV) \to Y$ is quasi-affine, or equivalently, the classifying morphism $c_{\shV} \colon X \to \clsp \GL_{n,Y}$ is quasi-affine.
\end{enumerate}
\end{thm}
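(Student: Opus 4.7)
My plan is to apply Proposition \ref{PROP:quaff=genOX+affinestabilizers} to the composite $f\circ p\colon F(\shV)\to Y$, exchanging information between $X$ and $F(\shV)$ via the affine, faithfully flat $\GL_n$-torsor $p\colon F(\shV)\to X$ on which $\shV$ is trivialized as $\shO_{F(\shV)}^n$. The two conditions in (ii) are equivalent by $\fpqc$ descent of quasiaffineness along the universal torsor $Y\to\clsp\GL_{n,Y}$, whose base change along $c_\shV$ is precisely $F(\shV)\to Y$.

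For (i) $\Rightarrow$ (ii) I would verify the two hypotheses of Proposition \ref{PROP:quaff=genOX+affinestabilizers} for $f\circ p$. The affine-stabilizer condition transfers from $f$ to $f\circ p$ because $p$ is representable: at a geometric point $(x,\phi)\in F(\shV)$ the stabilizer is the kernel of the morphism $G_x\to\GL_n$ induced by $\phi$, a closed subgroup of the affine group $G_x$. For universal generation by $\shO_{F(\shV)}$, the affineness of $p$ makes $\shO_{F(\shV)}$ universally $p$-generating by Proposition \ref{PROP:permanence}.\ref{PROP:permanence:IT:quaff}; composing with the tensor-generating family $\shG_I$ via \ref{PROP:permanence:IT:composition} renders $p^*\shG_I$ universally $(f\circ p)$-generating; and the $p$-locally split hypothesis forces each $p^*\shG_i$ to be a direct summand, hence a quotient, of some free sheaf $\shO_{F(\shV)}^{N_i}$, so the singleton family $\{\shO_{F(\shV)}\}$ already generates universally. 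Proposition \ref{PROP:quaff=genOX+affinestabilizers} then yields quasiaffineness of $f\circ p$, equivalently of $c_\shV$.

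For (ii) $\Rightarrow$ (i), representability of the quasiaffine morphism $c_\shV$ gives a monomorphism $I_f\hookrightarrow c_\shV^*I_{\clsp\GL_{n,Y}/Y}=c_\shV^*\GL_{n,Y}$, whose geometric fibres are subgroup schemes of $\GL_n$ over a field, hence affine, proving (b). For (a) I would analyse the affine $\shO_X$-algebra $\shA\coloneq p_*\shO_{F(\shV)}$, an $\fpqc$-twist of the coordinate algebra $\shO_{\GL_n}$. The $\GL_n$-equivariant Cauchy/Peter--Weyl decomposition $\shO_{\GL_n}\simeq\bigoplus_\lambda S_\lambda V\otimes S_\lambda V^\vee$ (with determinant twists accounting for $\det^{-1}$) transports to $\shA\simeq\bigoplus_\lambda S_\lambda(\shV)\otimes S_\lambda(\shV^\vee)$, each summand being a $p$-locally split subsheaf of some $P(\shV,\shV^\vee)$. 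Given $\shM\in\Qco(X)$, the universal $(f\circ p)$-generation of $\shO_{F(\shV)}$ (available since $f\circ p$ is quasiaffine) provides a surjection $\bigoplus_j(f\circ p)^*\shN_j\twoheadrightarrow p^*\shM$; pushing forward by $p_*$ (exact, by affineness of $p$) and invoking the projection formula yields $\bigoplus_j f^*\shN_j\otimes\shA\twoheadrightarrow\shM\otimes\shA$, and composing with the projection $\shA\to\shO_X$ onto the trivial-representation ($\lambda=\emptyset$) summand exhibits $\shM$ as a quotient of $\bigoplus_{j,\lambda} f^*\shN_j\otimes S_\lambda(\shV)\otimes S_\lambda(\shV^\vee)$, displaying $\shV$ as an $f$-tensor generator. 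Universality over $Y$ follows because every step commutes with base change.

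The principal obstacle is justifying the Cauchy/Peter--Weyl decomposition of $\shA$ integrally and verifying that it descends through the torsor $p$ in such a way that the summands $S_\lambda(\shV)\otimes S_\lambda(\shV^\vee)$ are genuine locally split subsheaves of tensor powers of $\shV$ and $\shV^\vee$, and in particular that $\shO_X$ splits off from $\shA$ as an $\shO_X$-linear retraction onto the trivial component. Over a general base this rests on the $\lambda$-ring formalism for Schur functors rather than on complete reducibility of $\GL_n$-representations. Once this representation-theoretic input is secured, the remainder of the argument is a formal consequence of Propositions \ref{PROP:permanence} and \ref{PROP:quaff=genOX+affinestabilizers}.
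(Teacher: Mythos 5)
Your direction (i) $\Rightarrow$ (ii) is essentially the paper's argument: trivialize the sheaves $P(\shV,\shV^\vee)$ on $F(\shV)$ so that the $p$-locally split subsheaves become globally generated, deduce that $\shO_{F(\shV)}$ is universally generating for $f\circ p$, transfer affineness of stabilizers along the representable affine morphism $p$, and invoke Proposition \ref{PROP:quaff=genOX+affinestabilizers}. The identification of the two formulations of (ii) via descent along $Y\to \clsp\GL_{n,Y}$ and your proof of (ii) $\Rightarrow$ (i)(b) are likewise fine.

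The gap is in (ii) $\Rightarrow$ (i)(a). The decomposition $\shA \simeq \bigoplus_\lambda S_\lambda(\shV)\otimes S_\lambda(\shV^\vee)$ is the algebraic Peter--Weyl theorem, and it is \emph{not} available over $\bbZ$ or in positive characteristic: the Cauchy/Akin--Buchsbaum--Weyman formalism for Schur functors over a general base yields only a $\GL_n$-stable \emph{filtration} of $\Sym(V\otimes V^\vee)$ (and of $\shO_{\GL_n}$) whose graded pieces are the $S_\lambda V\otimes S_\lambda V^\vee$, not a direct sum splitting; the $\lambda$-ring formalism you appeal to produces identities in $K$-theory and filtrations, never retractions. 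In particular your crucial final step --- the equivariant projection $\shA\to\shO_X$ onto the trivial summand --- is exactly a total integral (Reynolds operator) for $\GL_n$, and its existence is equivalent to linear reductivity of $\GL_n$, which holds only in characteristic $0$. (The proposition immediately following the theorem in the paper makes this point explicit: only for linearly reductive structure groups can one dispense with locally split subsheaves.) As written, your argument therefore proves the implication only over a $\bbQ$-algebra. The paper's route avoids the issue entirely: since $c_\shV$ is quasiaffine, $\pb{c_\shV}$ preserves relative generating families, so it suffices to know that the standard representation $\shE$ is a tensor generator for $\clsp\GL_{n,\bbZ}\to\Spec(\bbZ)$; that is Example \ref{EX:general_linear_group}, whose proof (after Waterhouse) generates an arbitrary comodule by finitely generated subcomodules of the regular representation and embeds those as locally split subsheaves of suitable $P(\shV,\shV^\vee)$ --- no splitting of $\shO_{\GL_n}$ itself is required. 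To repair your write-up, replace the Peter--Weyl decomposition of $\shA$ by this reduction to $\clsp\GL_{n,\bbZ}$, or reprove Waterhouse's embedding statement integrally.
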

\begin{proof}\ref{THM:tensor_generator1} $\Rightarrow$ \ref{THM:tensor_generator2}:
Let $(\shG_i \subset P_i(\shV, \shV^\vee))_{i \in I}$ be a universally $f$-generating family of $p$-locally split subsheaves. Then $( \pb p \shG_i)_{i \in I}$ is universally $f \circ p$-generating since $p$ is affine.
But each $\pb p \shG_i$ is a direct summand of the free sheaf $\pb p P(\shV, \shV^\vee)$, showing that $\shO_{F(\shV)}$ is universally $f \circ p$-generating. Moreover, since $I_f \to X$ has affine fibers and $p$ is affine, we observe that $I_{f \circ p} \to X$ has affine fibers. Thus, $f \circ p$ is quasi-affine by Proposition \ref{PROP:quaff=genOX+affinestabilizers}.

\ref{THM:tensor_generator2} $\Rightarrow$ \ref{THM:tensor_generator1}:
First, note that $I_f \to Y$ has affine fibers because the stabilizer groups of $X \to Y$ are subgroups of $\GL_n$ on each fiber.

Let $\shE$ be the vector bundle on $\clsp \GL_{n}$ that corresponds to the standard representation. Then $\pb{ (q \circ c_{\shV} )} \shE \simeq \shV$, where $q \colon  \clsp \GL_{n,Y} \to \clsp \GL_{n}$ denotes the projection that is given by the base change $Y \to \Spec(\bbZ)$. So, if $\shE$ is a tensor generator for $\clsp \GL_{n}$, then $\shV$ is one for $f$ by Remark \ref{REM:tensor_generator_permanence}.

This reduces to the case that $X=\clsp \GL_{n}$ over $Y = \Spec \bbZ$, where $\shV$ is the standard representation. It follows from the proof of \cite[Theorem 3.5]{Wat79:MR547117} that $\pf p \shO_Y$ is a union of vector bundles $\shV_{\alpha} \subset \pf p \shO_Y$, such that each $\shV_{\alpha}$ is a quotient of some polynomial expression $P_{\alpha}(\shV, \shV^\vee)$. Namely, by choosing a basis for the free $\bbZ$-module $\pb p \shV$, then $\pf p \shO_Y$ can be identified with the Hopf algebra $\bbZ[x_{11}, \dots, x_{nn}, \det^{-1}]$ and $\shV_{\alpha}$ with the comodule $\det^{-s_\alpha}\lbrace q \in \bbZ[x_{11}, \dots, x_{nn}] \mid \deg q \leq r_\alpha \rbrace$, $s_\alpha, r_\alpha \geq 0$.

In order to prove that $\shV$ is a tensor generator, pick a quasi-coherent sheaf $\shF$ on $\clsp \GL_{n}$. We may assume that $\shF$ is locally free of constant rank $m$ since $\clsp \GL_{n}$ has the resolution property. 
Then holds $\pb p \shF \simeq \shO_Y^{\oplus m}$ because every locally free sheaf on $Y=\Spec(\bbZ)$ is free.
So, we can identify $\shF$ with a $p$-locally split subsheaf of $\pf p \shO_Y^{\oplus m}$ via the unit $\shF \to \pf p \pb p \shF$, which is $p$-locally split by adjunction. As $\shF$ is of finite type, the inclusion $\shF \subset \pf p{\shO_Y^{\oplus m}}$ factors as $\shF \subset \shV_\alpha^{\oplus m} \subset \pf p \shO_Y^{\oplus m}$ for every sufficiently large $\alpha$. Also, $\shF \subset \shV_\alpha^{\oplus m}$ is $p$-locally split, so that $\shV_\alpha^{\oplus m}/\shF$ is a vector bundle.

Choose a surjection $P(\shV, \shV^\vee) \twoheadrightarrow \shV_\alpha^{\oplus m}$, and let $\shG$ be the preimage of $\shF$. Then $\shF$ is a quotient of $\shG$. Moreover, the induced map $P(\shV, \shV^\vee)/\shG \to \shV_\alpha^{\oplus m}/\shF$ is an isomorphism, showing that $P(\shV, \shV^\vee)/\shG$ is a vector bundle, so that $\shG \subset P(\shV, \shV^\vee)$ is $p$-locally split. Thus, $\shV$ is a tensor generator.
\end{proof}

If the structure group of the vector bundle $\shV$ is linearly reductive, then a generating family can be deduced from $\shV$ without taking subsheaves. Recall that an group scheme $G$ over $Y$ is linearly reductive if $\clsp G \to Y$ is cohomologically affine \cite[Def. 12.1]{Alp13:goodmoduli}.

\begin{prop}\label{PROP:linred_tengen} 
With the preceding notations, suppose that $\shV$ is a tensor generator and that the associated bundle of $\GL_{n,Y}$-frames $p \colon F(\shV) \to X$ is induced by a $G$-torsor $\pi \colon Z \to X$, for some flat linearly reductive subgroup scheme $G \subset \GL_{n,Y}$. 
Then $\shV$ is a strong tensor generator. 
\end{prop}
\begin{proof}
First, note that the classifying morphism $c_{\pi} \colon X \to \clsp G$ is quasi-affine because the composition $X \xrightarrow{c_{\pi}} \clsp G \to \clsp \GL_n$ is and $\clsp G \to \clsp \GL_n$, being representable, has quasi-affine diagonal. 
The upshot is that $\pb {c_{\pi}}$ preserves generating families, which reduces the statement to the case $X$ is $\clsp G$ for some linearly reductive group scheme $G$ over an affine base $Y$. 

Pick a $p$-locally split subsheaf $\shG \subset P(\shV,\shV^\vee)$ and denote by $\shC$ the quotient. Then $\pb p \shC$ is a direct summand of the free sheaf $\pb p P(\shV,\shV^\vee)$, showing that $\shC$ is locally free. Then $\Ext^1_{\clsp G}(\shC, \shG) = \Ext^1_{\clsp G}(\shO_{\clsp G}, \shC^\vee \otimes \shG) = H^1(\clsp G,\shC^\vee \otimes \shG)$. The latter cohomology group vanishes because $G$ is linearly reductive. It follows that the short exact sequence $0 \to \shG \to P(\shV,\shV^\vee) \to \shC \to 0$ splits.
\end{proof}

It is well-known that $\GL_{n, Y}$ is linearly reductive if $Y$ is of characteristic zero:
\begin{cor}\label{COR:tengen_char_zero}
Let $f \colon X \to Y$ be a morphism with $Y$ of characteristic zero and relatively affine stabilizer groups. Then every tensor generator is a strong tensor generator.
\end{cor}

A split vector bundle  $\shV=\bigoplus_{j=1}^{n} \shL_j$ has linearly reductive structure group $\Gm^n$:
\begin{cor}\label{COR:split_tensor_generator}
Let $(\shL_j)_{j=1}^n$ be a family of invertible sheaves on $X$. Then the following properties are equivalent:
\begin{enumerate}
\item $\bigoplus_{j=1}^n \shL_{j}$ is a tensor generator for $f$.
\item $\bigoplus_{j=1}^n \shL_{j}$ is a strong tensor generator for $f$.
\item The family $(\shL_1^{\otimes \ell_1} \otimes \cdots \otimes \shL_n^{\otimes \ell_n})_{\ell_1, \dots, \ell_n \in \bbZ}$ is universally $f$-generating.
\item The total space $Z$ of the associated $\Gm^n$-torsor $Z \to X$ is quasi-affine over $Y$.
\end{enumerate}
In particular, every invertible tensor generator is a strong tensor generator.
\end{cor}

\begin{rem}
The corollary generalizes \cite[Thm. 1]{Hau02:MR1900763} to algebraic stacks that are not irreducible algebraic varieties.
\end{rem}
\begin{prop}
Let $f \colon X \to Y$ be a morphism of algebraic stacks such that the inertia $I_f \to X$ has affine fibers. Let $\shV$ be a vector bundle on $X$. Then $\shV$ is a tensor generator for $f$ if and only if $\shV|_{X_\red}$ is a tensor generator for $f_{\red} \colon X_\red \to Y_\red$.
\end{prop}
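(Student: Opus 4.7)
My plan is to pass through Theorem~\ref{THM:tensor_generator}, reducing both sides to a statement about quasiaffineness of the classifying morphism, and then to invoke the standard fact that quasiaffineness of a qcqs morphism of algebraic stacks is both preserved and reflected by reduction.

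First I would check that the inertia hypothesis transfers to $f_\red$: the relative inertia $I_{f_\red}$ equals $(I_f \times_X X_\red)_\red$, so its geometric fibers are the reductions of the (by assumption affine) geometric fibers of $I_f$, and reducedness does not affect affineness. Thus Theorem~\ref{THM:tensor_generator} applies simultaneously to $(f,\shV)$ and to $(f_\red,\shV|_{X_\red})$.

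Next I would exploit that $\GL_{n,\bbZ}$ is smooth, and hence reduced, over $\Spec\bbZ$, so $\clsp\GL_{n,\bbZ}$ is reduced and $(\clsp\GL_{n,Y})_\red = \clsp\GL_{n,Y_\red}$. Consequently, the reduction of the classifying morphism $c_\shV \colon X \to \clsp\GL_{n,Y}$ coincides with $c_{\shV|_{X_\red}} \colon X_\red \to \clsp\GL_{n,Y_\red}$. By Theorem~\ref{THM:tensor_generator} the proposition is then equivalent to the claim that a qcqs morphism $g \colon Z \to W$ of algebraic stacks is quasiaffine if and only if $g_\red \colon Z_\red \to W_\red$ is quasiaffine. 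The forward direction is formal: base-changing along $W_\red \hookrightarrow W$ gives that $Z \times_W W_\red \to W_\red$ is quasiaffine, and composing with the quasiaffine closed immersion $Z_\red \hookrightarrow Z \times_W W_\red$ yields the quasiaffineness of $g_\red$.

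The hard part, which I expect to be the main obstacle, is the converse. I would smooth-localize on $W$ to reduce to the case where $W$ is an affine scheme, turning the question into: if $Z$ is a qcqs algebraic stack whose reduction $Z_\red$ is a quasiaffine scheme, is $Z$ itself a quasiaffine scheme? Since stabilizers at geometric points are unchanged by reduction, $Z$ is an algebraic space; an affine open cover of $Z_\red$ then lifts to an affine open cover of $Z$, since an algebraic space whose reduction is affine is itself affine (a form of Chevalley's theorem already invoked in the proof of Theorem~\ref{THM:finitely_param_algspace}), so $Z$ is a scheme. Finally, the open immersion $Z_\red \hookrightarrow \Spec\Gamma(Z_\red,\shO_{Z_\red})$ supplied by the ample structure sheaf lifts to an open immersion $Z \hookrightarrow \Spec\Gamma(Z,\shO_Z)$ on the common underlying topological space, yielding that $Z$ is quasiaffine and completing the argument.
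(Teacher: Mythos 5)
Your proposal follows the paper's proof exactly: the paper likewise deduces the statement from Theorem~\ref{THM:tensor_generator} by observing that $F(\shV) \to Y$ is quasiaffine if and only if $F(\shV|_{X_\red}) = F(\shV)_\red \to Y_\red$ is, a fact it asserts in one line without further justification. The extra details you supply --- invariance of the inertia fibers under reduction, Chevalley's theorem to lift the affine cover, and the insensitivity of ampleness of the structure sheaf to nil-ideals (EGA II 4.5.13, 5.1.9) --- are correct and merely flesh out what the paper leaves implicit.
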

\begin{proof}
The total space of the frame bundle $F(\shV)$ is quasi-affine over $Y$ if and only if $F(\shV|_{X_\red})$ is quasi-affine over $Y_{\red}$ \cite[Cor. 8.2]{Ryd15:noetherianapprox}. Hence, the result is a direct consequence of Theorem \ref{THM:tensor_generator}.
\end{proof}

\medskip

Let us finally generalize Totaro's theorem to the general relative case.

\begin{thm}\label{THM:resprop=basic}
 Let $f \colon X \to Y$ be a morphism of algebraic stacks with $Y$ quasi-compact. Then the following conditions are equivalent:
\begin{enumerate}
\item \label{THM:resprop=basic:IT:resprop} 
\begin{enumerate}
 \item $f$ has the resolution property, and
 \item the relative inertia stack $I_{f} \to X$ has affine fibers (for instance, if $\Delta_f$ is quasi-affine).
\end{enumerate}
\item \label{THM:resprop=basic:IT:basic} For sufficiently large $n \geq 0$ the morphism $f$ admits a factorization
 \begin{equation}
 \xymatrix{
  X \ar[dr]_{f} \ar[r]^-{g} & \clsp \GL_{n,Y} \ar[d]^q \\
  & Y
}
\end{equation}
where $g$ is quasi-affine, which is the classifying morphism of the frame bundle for some vector bundle $\shV$ of rank $n$, and $q$ is the structure morphism. In particular, the diagonal $\Delta_f$ is affine.
\end{enumerate}
\end{thm}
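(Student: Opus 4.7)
I would establish the two directions separately.

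\emph{The implication \ref{THM:resprop=basic:IT:basic} $\Rightarrow$ \ref{THM:resprop=basic:IT:resprop} is straightforward.} Given the factorization $f = q \circ g$ with $g$ quasiaffine, Proposition \ref{resproprel:PROP:permanence}.\ref{resproprel:PROP:permanence_examples} gives the resolution property for $g$, while Example \ref{EX:general_linear_group} combined with Proposition \ref{resproprel:PROP:permanence}.\ref{resproprel:PROP:permanence_basechange} gives it for $q \colon \clsp\GL_{n,Y} \to Y$; composition (Proposition \ref{resproprel:PROP:permanence}.\ref{resproprel:PROP:permanence_composition}) then yields the resolution property for $f$. Affineness of $\Delta_f$ follows by composing the closed immersion $\Delta_g$ (quasiaffine morphisms being separated) with the base change of the affine $\Delta_q$, whence $I_f \to X$ has affine fibers.

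\emph{For the main implication \ref{THM:resprop=basic:IT:resprop} $\Rightarrow$ \ref{THM:resprop=basic:IT:basic}}, I would invoke Theorem \ref{THM:tensor_generator} to reformulate the goal as: construct a single vector bundle $\shV$ on $X$ whose frame bundle $p \colon F(\shV) \to X$ has quasiaffine total space over $Y$. The classifying morphism $g = c_{\shV} \colon X \to \clsp\GL_{n,Y}$ will then be the desired quasiaffine factorization. Since $p$ is affine, the affine-inertia hypothesis on $f$ transfers to $f \circ p$, so Proposition \ref{PROP:quaff=genOX+affinestabilizers} further reduces the task to arranging that $\shO_{F(\shV)}$ be universally $(f \circ p)$-generating.

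\emph{Construction of $\shV$ via a universal trivializing torsor.} Let $(\shV_\lambda)_{\lambda \in \Lambda}$ be a generating family of vector bundles on $X$ provided by the resolution property. I would form the ``universal trivializing torsor'' $\mathcal{P} \to X$, realized as the inverse limit, over finite $S \subset \Lambda$, of the frame bundles $F\bigl(\bigoplus_{\lambda \in S} \shV_\lambda\bigr) \to X$; the transition maps are affine torsors under affine groups. Every vector bundle on $X$ becomes free after pullback to $\mathcal{P}$, so the resolution property of $f$ transfers, in the limit, to the statement that $\shO_{\mathcal{P}}$ is universally generating over $Y$. Together with the affine relative stabilizers on $\mathcal{P} \to Y$ inherited from $f$, Proposition \ref{PROP:quaff=genOX+affinestabilizers} renders $\mathcal{P} \to Y$ ``quasiaffine in the pro-limit''.

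\emph{Main obstacle: approximation.} The crucial step is descending this pro-limit quasiaffineness to a finite subcollection $S \subset \Lambda$, which I would execute with D.~Rydh's approximation machinery for quasicompact quasiseparated algebraic stacks \cite{Ryd13:arxiv0904.0227v3}, in the same spirit as the reductions used for Theorem \ref{THM:finitely_param_algspace}. Because the transition morphisms are affine and quasiaffineness is of finite presentation in the relevant limit category, quasiaffineness of $\mathcal{P} \to Y$ should already hold for $F\bigl(\bigoplus_{\lambda \in S} \shV_\lambda\bigr) \to Y$ for some finite $S$. Setting $\shV \coloneq \bigoplus_{\lambda \in S} \shV_\lambda$ and feeding back through Theorem \ref{THM:tensor_generator} completes the construction of the quasiaffine $g = c_{\shV}$, with affineness of $\Delta_f$ read off as in the easy direction. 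The chief technical hurdle I anticipate is precisely this finite-presentation descent: making rigorous sense of the pro-limit $\mathcal{P}$ as an algebraic stack and tracking how universal generation of $\shO_{\mathcal{P}}$ is witnessed by a single finite stage.
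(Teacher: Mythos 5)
Your proposal follows essentially the same route as the paper: both directions are handled identically, and the core of (i)$\Rightarrow$(ii) is the same ``universal trivializing torsor'' built as a filtered inverse limit with affine bonding maps, made quasiaffine over $Y$ via Proposition \ref{PROP:quaff=genOX+affinestabilizers}, and then descended to a finite stage by Rydh's approximation theorem ([Thm.~C, Ryd13]) using quasicompactness of $Y$ --- exactly the step you correctly single out as the technical crux. One concrete correction: the frame bundles $F\bigl(\bigoplus_{\lambda \in S}\shV_\lambda\bigr)$ do \emph{not} form an inverse system as $S$ grows (an isomorphism $\shO^{n'}\simeq\bigoplus_{S'}\shV_\lambda$ induces no natural isomorphism $\shO^{n}\simeq\bigoplus_{S}\shV_\lambda$), so the limit $\mathcal{P}$ as you describe it does not exist. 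The paper instead takes the $X$-fiber product $(\prod_{/X})_{\lambda\in S}F(\shV_\lambda)$, a torsor under $\prod_\lambda\GL_{n_\lambda}$, for which the projections give genuine affine transition maps; after descending to a finite $S$ one then passes from $\clsp\bigl(\prod\GL_{n_\lambda}\bigr)$ to $\clsp\GL_n$ via the affine Stiefel quotient $\GL_n/\prod\GL_{n_\lambda}$ (which amounts to your replacement of the product of frame bundles by the frame bundle of the direct sum). With that substitution your argument is the paper's argument.
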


\begin{proof}
Let $f \colon X \to Y$ be a morphism of algebraic stacks. Suppose that $f$ factors by a quasi-affine morphism $ X \to \clsp \GL_{n,Y}$ followed by the projection $\clsp \GL_{n,Y} \to Y$.
Then both morphisms have the resolution property and so has the composition $f$. Moreover, both morphisms have affine diagonal, so $f$ has too, and we conclude that the inertia $I_f = X \times_{X \times_Y X} X \to X$ is affine.

Conversely, suppose that $f$ has the resolution property and that $I_f \to X$ has affine fibers at geometric points.
Let $(\shV_i)_{i \in I}$ be a universally $f$-generating family of vector bundles on $X$, for instance the family of all vector bundles on $X$ up to isomorphism. We have to show that there exists a quasi-affine morphism $X \to \clsp \GL_{n,Y}$. 
For every finite subset $J \subset I$ the $X$-fiber product 
$p_J \colon F_J \coloneq (\prod_{/X})_{i \in J} F(\shV_i) \to X$ is an affine morphism. Let $(F_J \to F_K )_{K \subset J}$ be the natural inverse system for the family $(F_J \to X)$. Then the inverse limit of $X$-stacks $F \coloneq \varprojlim F_J$ is an algebraic stack over $X$ because the bonding maps $F_J \to F_K$ are affine.
The projection $p \colon F\to X$ is an affine morphism and has the property, that $\pb p (\shV_i)$ is trivial for every $i \in I$. So $\shO_{F}$ is universally $f \circ p$-generating. Since $I_f \to X$ has affine fibers, the inertia $I_{f \circ p} \to F$ also has affine fibers because $p$ is affine. Hence, $f \circ p$ must be quasi-affine by Proposition \ref{PROP:quaff=genOX+affinestabilizers}.
But then for $J \subset I$ sufficiently large each $p_J \colon F_J \to Y$ must be already quasi-affine since $Y$ is quasi-compact \cite[Thm. C]{Ryd15:noetherianapprox}.
The morphism $p_J$ is a torsor for the relative product group $G \coloneq (\prod_{/Y})_{i \in J} \GL_{n_i,Y}$, where $n_i = \rk \shV_i$, and the classifying morphism $X \to \clsp G$ is quasi-affine because $p_J$ is. 

To finish the proof it suffices to construct a quasi-affine morphism $\clsp G \to \clsp \GL_{n,\bbZ}$.
The diagonal embedding $G \hookrightarrow \GL_{n,\bbZ}$, $n = \sum_{i \in J} \rk \shV_i$, induces a morphism of torsors and therefore a morphism $\clsp G \to \clsp \GL_{n,\bbZ}$, which is affine by smooth descent because the base change along the natural map $\Spec (\bbZ) \to \clsp \GL_{n,\bbZ}$ is the affine Stiefel scheme $\GL_{n,\bbZ}/G$.
\end{proof}

\begin{cor}
Let $f$ be a quasi-compact and quasi-separated morphism of algebraic stacks. If $f$ has the resolution property and the relative inertia $I_f \to X$ has affine fibers, then the diagonal $\Delta_f \colon X \to X \times_Y X$ is affine.
\end{cor}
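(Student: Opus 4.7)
The plan is to reduce to Theorem \ref{THM:resprop=basic} by working locally on the target, since the only extra hypothesis missing compared with that theorem is quasicompactness of $Y$. The property ``$\Delta_f$ is affine'' is local on $Y$: it is fpqc-local on the target by descent of affine morphisms, so it suffices to prove affineness of the base change $\Delta_{f_\alpha}$ for a suitable smooth surjection $\coprod_\alpha Y_\alpha \to Y$ from a disjoint union of affine schemes $Y_\alpha$.

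Next I would check that for each $\alpha$ the base-changed morphism $f_\alpha \colon X_\alpha \coloneq X \times_Y Y_\alpha \to Y_\alpha$ still satisfies the hypotheses of Theorem \ref{THM:resprop=basic}. The resolution property passes to $f_\alpha$ by Proposition \ref{resproprel:PROP:permanence}.\ref{resproprel:PROP:permanence_basechange}. For the inertia condition, I would use that the formation of the relative inertia stack commutes with base change, so $I_{f_\alpha} = I_f \times_X X_\alpha \to X_\alpha$; hence the geometric fibers of $I_{f_\alpha}$ agree with those of $I_f$ at the corresponding geometric points of $X$, and are therefore affine.

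Each $Y_\alpha$ being affine (hence quasicompact), Theorem \ref{THM:resprop=basic} applies to $f_\alpha$ and delivers a factorization through $\clsp{\GL}_{n_\alpha, Y_\alpha}$ with quasiaffine first factor; by the concluding sentence of that theorem, $\Delta_{f_\alpha}$ is affine for every $\alpha$. Since $\Delta_{f_\alpha}$ is obtained from $\Delta_f$ by base change along the smooth cover $Y_\alpha \to Y$, fpqc descent of affine morphisms gives that $\Delta_f$ is affine.

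The argument is essentially a localization trick, so there is no genuine obstacle beyond verifying that both hypotheses of Theorem \ref{THM:resprop=basic} are stable under base change along $Y_\alpha \to Y$; the only subtle point is that the quasicompactness assumption in Theorem \ref{THM:resprop=basic} enters only through the approximation step producing a finite index set $J$, and this step is precisely what is handled once we have passed to the affine base $Y_\alpha$.
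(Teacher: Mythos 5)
Your argument is correct and is exactly the intended deduction: the paper states this corollary without proof immediately after Theorem \ref{THM:resprop=basic}, and the evident route is precisely your localization — pass to a smooth affine cover of $Y$, note that the resolution property (Proposition \ref{resproprel:PROP:permanence}) and the affine-fibers condition on the relative inertia are stable under this base change, apply the theorem over each affine piece, and conclude by descent of affineness of $\Delta_f$. No gaps.
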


\begin{proof}[Proof of Theorem \ref{THM:thmA}]
The implication \ref{THM:IT:thmA:basic} $\Rightarrow$ \ref{THM:IT:thmA:tengen}
follows from Theorem \ref{THM:tensor_generator}. Clearly, holds \ref{THM:IT:thmA:tengen} $\Rightarrow$ \ref{THM:IT:thmA:resprop}.
The remaining implication  \ref{THM:IT:thmA:resprop} $\Rightarrow$ \ref{THM:IT:thmA:basic} follows from Lemma \ref{LEM:resprop_closed_affines} and Theorem \ref{THM:resprop=basic} (with $Y = \Spec \bbZ$). 
Finally, the addendum was given in Proposition \ref{PROP:linred_tengen}, Corollary \ref{COR:tengen_char_zero} and Corollary \ref{COR:split_tensor_generator}.
\end{proof}

\bibliographystyle{pgalpha}
\bibliography{pgbiblio}
\end{document}